\numberwithin{equation}{section}
\numberwithin{figure}{section}
\numberwithin{table}{section}
\theoremstyle{plain}
\newcommand{\Z}{{\mathbb Z}}
\newcommand{\Q}{{\mathbb Q}}
\newcommand{\GEN}[1]{\langle #1 \rangle}
\newcommand{\PSL}{\text{PSL}}
\newcommand{\Tr}{\text{Tr}}
\newcommand{\var}{{\varepsilon}}
\newcommand{\diag}{{\rm diag}}
\newcommand{\V}{\mathrm{V}}
\newcommand{\B}{{\mathbb B}}
\newcommand{\MM}[2]{\left( #1 : #2 \right)}
\newcommand{\AMM}[2]{\left| #1 : #2 \right|}
\DeclareMathOperator{\Gal}{Gal}
\title{Zassenhaus Conjecture on torsion units holds \\ for $\PSL(2,p)$  with $p$ a Fermat or Mersenne prime}
\thanks{This research is partially supported by the European Commission under Grant 705112-ZC, by the Spanish Government under Grant MTM2016-77445-P with "Fondos FEDER" and, by Fundación Séneca of Murcia under Grant 19880/GERM/15.}
\author{Leo Margolis}
\author{Ángel del Río}
\author{Mariano Serrano}
\address{Departamento de Matemáticas, Universidad of Murcia. 30100 Murcia, Spain}
\email{leo.margolis@um.es, adelrio@um.es, mariano.serrano@um.es}
\date{\today}
\subjclass[2010] {16U60, 16S34}
\newcommand{\PP}[1]{{#1}'}
\newtheorem{theorem}{Theorem}[section]
\newtheorem{lemma}[theorem]{Lemma}
\newtheorem{proposition}[theorem]{Proposition}
\newtheorem{corollary}[theorem]{Corollary}
\begin{document}

\begin{abstract}
H.J. Zassenhaus conjectured that any unit of finite order in the integral group ring $\Z G$ of a finite group $G$ is conjugate in the rational group
algebra $\Q G$ to an element of the form $\pm g$ with $g\in G$. Though known for some series of solvable groups, the conjecture has been
proved only for thirteen non-abelian simple groups. We prove the Zassenhaus Conjecture for the groups $\PSL(2,p)$, where $p$ is a Fermat or Mersenne prime.
This increases the list of non-abelian simple groups for which the conjecture is known by probably infinitely many, but at least by $49$, groups. Our result is
an easy consequence of known results and our main theorem which states that the Zassenhaus Conjecture holds for a unit in $\Z\PSL(2,q)$ of order
coprime with $2q$, for some prime power $q$.
\end{abstract}

\maketitle
\section{Introduction}
One of the most famous open problems regarding the unit group of an integral group ring $\Z G$ of a finite group $G$ is the Zassenhaus Conjecture which was stated by H.J. Zassenhaus \cite{Zassenhaus}:

\begin{quote}
\textbf{Zassenhaus Conjecture\footnote{After this paper was submitted a metabelian counterexample to the Zassenhaus Conjecture was announced in \cite{EiseleMargolis}. Still no simple counterexample is known.}:} If $G$ is a finite group and $u$ is a unit of finite order in the integral group ring $\Z G$,
then there exists a unit $x$ in the rational group algebra $\Q G$ and an element $g \in G$ such that $x^{-1}ux = \pm  g$.
\end{quote}

If for a given $u$ such $x$ and $g$ exist, one says that $u$ and $\pm g$ are rationally conjugate.
The Zassenhaus Conjecture found much attention and was proved for many series of solvable groups, e.g. for nilpotent groups \cite{Weiss91}, groups possessing a
normal Sylow subgroup with abelian complement \cite{HertweckColloq} or cyclic-by-abelian groups \cite{CyclicByAbelian}.
Regarding non-solvable groups, however, the conjecture is only known for very few groups.
The proofs of the results for solvable groups mentioned above often argue by induction on the order of the group.
In this way one may assume that the conjecture holds for proper quotients of the original group.
The first step in a similar argument for non-solvable groups should consist in proving the conjecture for simple groups.
Although this has been studied by some authors, see e.g. \cite{LutharPassi, HertweckBrauer, HertweckA6, KonovalovM22, SalimA9A10, BachleMargolisEding07, AndreasMauricio}, the conjecture is still only known for exactly thirteen non-abelian simple groups all being isomorphic to some $\PSL(2,q)$ for some particular small prime power $q$
(see \cite[Theorem~C]{BachleMargolis4-PrimaryII} for an overview).
Our aim in this paper is to extend this knowledge by proving the following theorem.

\begin{theorem}\label{Main}
Let $G = \PSL(2,q)$ for some prime power $q$.  Then any torsion unit of $\Z G$ of order coprime with $2q$  is rationally conjugate to an element of $G$.
\end{theorem}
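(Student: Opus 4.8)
The plan is to combine the Luthar--Passi--Hertweck (HeLP) method with the reduction of Marciniak, Ritter, Sehgal and Weiss. Let $u$ be a torsion unit of $\Z G$ of order $n$ with $\gcd(n,2q)=1$; replacing $u$ by $-u$ if necessary we may assume its augmentation is $1$. By the Marciniak--Ritter--Sehgal--Weiss criterion, $u$ is rationally conjugate to an element of $G$ provided every partial augmentation $\varepsilon_C(u^d)$, over conjugacy classes $C$ and divisors $d\mid n$, is non-negative. I would proceed by induction on $n$: for $d\mid n$ with $d>1$ the power $u^d$ has order coprime with $2q$ and strictly smaller than $n$, so by the inductive hypothesis it is already rationally conjugate to an element of $G$; in particular all of its partial augmentations are known and non-negative. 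The problem therefore reduces to proving $\varepsilon_C(u)\ge 0$ for all $C$, treating every value $\chi(u^d)$ with $d>1$ as known.

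Next I would localize the support of the partial augmentations of $u$. By the Berman--Higman theorem $\varepsilon_1(u)=0$, and by a classical result (Cohn--Livingstone, refined by Hertweck) $\varepsilon_C(u)\neq 0$ forces the order of the elements of $C$ to divide $n$, hence to be coprime with $2q$. In $G=\PSL(2,q)$ the only such classes are semisimple and lie in the two families of maximal cyclic tori $T_+,T_-$, of coprime orders $a=(q-1)/\gcd(2,q-1)$ and $b=(q+1)/\gcd(2,q-1)$. Writing $x_C=\varepsilon_C(u)$ for the finitely many relevant classes, the unknowns are constrained by $\sum_C x_C=1$ together with the HeLP inequalities below; I would not try to exclude ``mixed'' orders by hand, but rather let the computation do it, the point being that for $n$ dividing neither $a$ nor $b$ the system should turn out to be infeasible, which is consistent with $G$ having no element of order $n$.

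The core is the HeLP analysis on this semisimple configuration. Since $u$ is a $p$-regular unit I would evaluate on the Brauer characters in the defining characteristic, whose values on a semisimple element with torus eigenvalues $\lambda,\lambda^{-1}$ are the clean symmetric sums $\lambda^{i}+\lambda^{i-2}+\cdots+\lambda^{-i}$ afforded by the symmetric powers of the natural module, supplemented by the ordinary principal and discrete series characters, which vanish on one of the two torus families and on the other take the values $\pm(\zeta^{k}+\zeta^{-k})$. Each available character $\chi$ then yields, for every $n$-th root of unity $\zeta^{l}$, the constraint
\[
\mu_l(u,\chi)=\frac1n\sum_{d\mid n}\Tr_{\Q(\zeta^d)/\Q}\!\big(\chi(u^d)\,\zeta^{-ld}\big)\in\Z_{\ge 0},
\]
in which the only unknowns are the $x_C$ entering $\chi(u)=\sum_C x_C\,\chi(C)$, all other terms being known by induction. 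The goal is to show that, over all these characters, the only solutions with $\sum_C x_C=1$ are the group-like ones ($x_{C_0}=1$ for a single class $C_0$ and $x_C=0$ otherwise), which are in particular non-negative; the Marciniak--Ritter--Sehgal--Weiss criterion then completes the proof.

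I expect the genuine obstacle to be this last step carried out uniformly in the prime power $q$. The number of semisimple classes, the exact set of roots of unity $\zeta^{m}$ that occur (governed by the Galois action and by the fusion $s\sim s^{-1}$ on the torus), and the small exceptional characters of degree $(q\pm1)/2$ all depend on $q\bmod 4$ and on the factorization of $n$, so a direct case analysis will not close in general; the composite-order cases are where this bites, the prime-order case being the relatively easy base of the induction. To control it I would repackage the constraints by averaging the multiplicities $\mu_l(u,\chi)$ over the characters of the supporting torus, equivalently reading the $x_C$ as the coefficients of a single inversion-invariant element of $\Z[\GEN{s}]$ for a generator $s$ of that torus, and then isolate a positive quantity---such as a weighted sum of the $\mu_l^2$---that is forced to attain its minimum exactly at the group-like solutions. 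Establishing this minimality for all $n$ and all $q$ at once, rather than prime by prime, is the delicate heart of the argument; the remaining ingredients reduce to bookkeeping with the classical character table of $\PSL(2,q)$.
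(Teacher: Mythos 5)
Your setup coincides with the paper's: the Marciniak--Ritter--Sehgal--Weiss criterion, induction on $n$ so that every $u^d$ with $d>1$ is already rationally conjugate to a group element, the Berman--Higman theorem, Hertweck's result that $\varepsilon_C(u)\neq 0$ forces the order of $C$ to divide $n$, and the defining-characteristic Brauer characters coming from symmetric powers of the natural module. But the proof stops exactly where it would have to begin. The entire content of the theorem is the step you defer to your last paragraph, and the strategy you sketch for it --- impose the classical HeLP constraints $\mu_l(u,\chi)\in\Z_{\geq 0}$ and then ``isolate a positive quantity, such as a weighted sum of the $\mu_l^2$, forced to attain its minimum exactly at the group-like solutions'' --- is a hope rather than an argument. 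It is, moreover, a route the authors explicitly decline: they state that the HeLP method in its classical sense produces too many case distinctions here, and that the variant matching character values against possible eigenvalue configurations founders on the sheer number of such configurations when $n$ is composite and $q$ is arbitrary. You give no reason why the proposed minimization would single out the group-like solutions, nor any indication of how to run it uniformly in $q$ and in the factorization of $n$; the composite-order case, which you correctly identify as the hard one, is left entirely open.

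What actually closes the argument in the paper is a different mechanism, and it is the missing idea. One fixes $g_0\in G$ of order $n$ with $\Theta_1(u)$ conjugate to $\Theta_1(g_0)$, sets $\lambda_x=\sum_{i}\varepsilon_i(\zeta^{ix}+\zeta^{-ix})$, and takes $d$ minimal with $\lambda_d\neq\alpha_d$, where $\alpha_x=\zeta^x+\zeta^{-x}$. A cyclotomic divisibility result (if $\omega_{d/q}=0$ for every prime power $q\neq 1$ dividing $d$, where $\omega_i=\sum_j A_j\zeta_n^{ij}$, then $\omega_d\in d\,\Z[\zeta_n]$) converts the minimality of $d$ into the divisibility $\lambda_d-\alpha_d\in d\,\Z[\zeta_n+\zeta_n^{-1}]$, so in an explicit $\Z$-basis of $\Z[\zeta_n+\zeta_n^{-1}]$ some coefficient of $\psi_d(u)-\psi_d(g_0)$ has absolute value at least $d$. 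On the other hand, a combinatorial count of how many eigenvalues of $\Theta_d(u)$ can contribute to a fixed basis element --- using only that the eigenvalue lists of $\Theta_d(u)$ and $\Theta_d(g_0)$ agree modulo $n/c$ for every divisor $c\neq 1$ of $n$, which is exactly what the induction hypothesis gives --- bounds that coefficient by $1+2^{P(d)+2}$. Comparing the two bounds forces $d\in\{3,5,7,15\}$, and a finite case analysis over the resulting values of $n$ yields the contradiction. Without this divisibility-plus-integral-basis argument, or a worked-out substitute for it, your proposal does not prove the theorem.
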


We prove this result employing a variation of a well known method which uses characters of a finite group $G$ to obtain restrictions on the possible torsion units in $\Z G$.
The idea of the method was introduced for ordinary characters by Luthar and Passi \cite{LutharPassi} and extended to Brauer characters by Hertweck  \cite{HertweckBrauer}. Today this method is often called
the HeLP (\textbf{He}rtweck\textbf{L}uthar\textbf{P}assi) Method.
In fact to prove our results we do not use the HeLP Method in the classical sense, since this would imply too many case distinctions. For this reason we vary the method in a way suitable for the character theory of $\PSL(2,q)$. Theorem~\ref{Main} can be regarded as a generalization of \cite[Theorem~1]{SylowPSL}.

As a direct application of Theorem~\ref{Main} and known facts about the units of $\Z\PSL(2,q)$ collected in Theorem~\ref{GroupProperties}, we obtain the result which gives name to this paper:

\begin{theorem}\label {FermatMersenne}
Let $p$ be a Fermat or Mersenne prime. Then the Zassenhaus Conjecture holds for $\PSL(2,p)$.
\end{theorem}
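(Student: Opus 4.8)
The plan is to reduce, via the special arithmetic of Fermat and Mersenne primes, every torsion unit of $\Z\PSL(2,p)$ to one of three kinds: one whose order is coprime with $2p$, covered by Theorem~\ref{Main}; one of $p$-power order; and one of $2$-power order, the latter two covered by the known results gathered in Theorem~\ref{GroupProperties}. First I would recall that it suffices to treat normalized torsion units $u$ (of augmentation $1$), the general case differing only by the sign in $\pm g$, and that the order $n$ of such a $u$ divides the exponent of $G = \PSL(2,p)$.

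The key structural observation is that if $p$ is a Fermat or a Mersenne prime, then exactly one of $\frac{p-1}{2}$ and $\frac{p+1}{2}$ is a power of $2$ while the other is odd, and both are coprime with $p$. Since every nontrivial element of $G$ is either unipotent of order $p$ or lies in a cyclic torus of order $\frac{p-1}{2}$ or $\frac{p+1}{2}$, it follows that every element order of $G$ is a power of $2$, the prime $p$, or an odd number coprime with $p$; in particular $G$ has no element of order $2r$ with $r$ an odd prime, and none of order $pr$ with $r \neq p$ a prime. Phrased through the prime graph of $G$, the vertices $2$ and $p$ are isolated while every other prime lies in the single clique determined by the odd torus.

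Next I would feed this into the known results of Theorem~\ref{GroupProperties}, in particular the fact that the prime graph question holds for $\PSL(2,p)$: whenever two distinct primes divide the order of a torsion unit, $G$ has an element whose order is their product. Combined with the previous paragraph this forces the order $n$ of $u$ to be either a prime power or coprime with $2p$. Indeed, if $n$ is divisible by two distinct primes then, $2$ and $p$ being isolated, neither of them occurs, so $n$ is coprime with $2p$; and if $n = r^a$ is a prime power then either $r$ is odd and different from $p$, again giving $n$ coprime with $2p$, or $n$ is a power of $2$, or $r = p$.

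Finally I would dispatch the three cases. If $n$ is coprime with $2p$, then $u$ is rationally conjugate to an element of $G$ by Theorem~\ref{Main}. If $n$ is a power of $2$ or a power of $p$, the same conclusion is supplied by the results on torsion units of prime-power order in $\Z\PSL(2,p)$ collected in Theorem~\ref{GroupProperties}; here one also uses that the Sylow $p$-subgroup has order $p$, so no unit of order $p^2$ can occur. As these cases are exhaustive, the Zassenhaus Conjecture holds for $\PSL(2,p)$. I expect the only genuine subtlety to be bookkeeping: verifying that the arithmetic of Fermat and Mersenne primes really does isolate $2$ and $p$ in the prime graph, since it is precisely this feature that eliminates the ``mixed-order'' units which, for a general prime $p$, would fall outside the scope of both Theorem~\ref{Main} and the prime-power results.
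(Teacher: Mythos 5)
Your proof is correct and is essentially the argument the paper intends when it calls Theorem~\ref{FermatMersenne} a ``direct application'' of Theorem~\ref{Main} and Theorem~\ref{GroupProperties}: the Fermat/Mersenne arithmetic makes one of $\frac{p\pm1}{2}$ a $2$-power and the other odd, so every torsion unit has order that is a $2$-power, equal to $p$, or coprime with $2p$, and each case is covered by the cited results. The only cosmetic point is that you need not route through the prime graph question: Theorem~\ref{GroupProperties}.(\ref{PSL2PrimePower}) already gives the stronger fact that a unit order coprime with $p$ is an element order of $G$ (hence divides $\frac{p\pm1}{2}$), and that a unit order divisible by $p$ equals $p$, which yields your trichotomy directly.
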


This result increases the number of simple groups for which the Zassenhaus Conjecture is known from thirteen to sixty-two: The groups $\PSL(2,q)$
with $q \in \{8, 9, 11, 13, 16, 19, 23, 25, 32 \}$  or one of the four known Fermat primes different from 3 or one of the forty-nine known Mersenne primes different
from 3 \cite{CaldwellMersennePrimes}.
Actually, Theorem~\ref{FermatMersenne} proves the conjecture for probably infinitely many simple groups because, based on heuristic evidences, it has been
conjectured that there are infinitely many Mersenne primes \cite{CaldwellHeuristics}.
Lenstra, Pomerance and Wagstaff have proposed independently a conjecture on the growth of the number of Mersenne primes smaller than a given integer
\cite{Pomerance,Wagstaff}.

It has been shown in \cite{AngelMariano} that a result as in Theorem \ref{Main} can not be achieved using solemnly the HeLP  Method if the unit has order $2p$,
where $2p$ is coprime with $q$ and $p$ a prime bigger than $3$. Looking on the orders of elements in $\PSL(2,q)$, cf.
Theorem~\ref{GroupProperties}, one should not
expect a better result for the Zassenhaus Conjecture for $\PSL(2,q)$ when applying only this method.
Thus, as so often in Arithmetics and Group Theory, the prime $2$ behaves very differently than the other primes.

We collect in Section~\ref{Preliminaries} the notation and known results which will be used during the proof of Theorem~\ref{Main}.
In Section~\ref{SectionNumberTheoreticalResult} we prove several number theoretical results which are essential for our arguments and introduce some more notation.
Finally in Section~\ref{SectionProofOfMainResult} we prove Theorem~\ref{Main}.

\section{Preliminaries}\label{Preliminaries}

Let $G$ be a finite group.
If $g\in G$, then $|g|$ denotes the order of $g$, the cyclic group generated by $g$ is denoted by $\langle g\rangle$
and $g^{G}$ denotes the conjugacy class of $g$ in $G$.
If $R$ is a ring then $RG$ denotes the group ring of $G$ with coefficients in $R$.
Denote by $\V(\Z G)$ the group of normalized units (i.e units of augmentation $1$) in $\Z G$.
As mentioned above, we say that two elements of $\Z G$ are rationally conjugate if they are conjugate in the units of $\Q G$.

The main notion to study rational conjugacy of torsion units in $\Z G$ are the so called partial augmentations. If $\alpha=\sum_{g\in G} \alpha_g g$ is an element of a group ring $\Z G$, with each $\alpha_g\in \Z$, then the partial augmentation of $\alpha$ at $g$ is defined as
	$$\var_{g}(\alpha)=\sum_{h\in g^{G}}\alpha_{h}.$$
The relevance of partial augmentations for the study of the Zassenhaus Conjecture is provided by a result of Marciniak, Ritter, Sehgal and Weiss.
The following theorem states this result and collects some known information about partial augmentations.

\begin{theorem}\label{Known}
Let $G$ be a finite group and let $u$ be an element of order $n$ in $\V(\Z G)$.
\begin{enumerate}
\item \cite[Theorem 2.5]{MarciniakRitterSehgalWeiss}\label{MRSW} $u$ is rationally conjugate to element in $G$ if and only if $\varepsilon_g(u^d) \geq 0$ for all $g \in G$ and all divisors $d$ of $n$.
\item\cite[Proposition 1.5.1]{EricAngel1} (Berman-Higman Theorem) If $u \neq 1$ then $\varepsilon_1(u) = 0$.
\item\cite[Theorem 2.3]{HertweckBrauer}\label{OrdersDivides} If $\varepsilon_g(u)\ne 0$ then $|g|$ divides $n$.
\item\cite[Theorem 3.2]{HertweckBrauer}\label{BrauerCharacterOnUnit}
Let $p$ be a prime not dividing $n$ and let $\chi$ be a $p$-Brauer character of $G$ associated to a modular representation $G \rightarrow M_m(k)$
for a suitable $p$-modular system $(K,R,k)$. Then $\chi$ extends to a $p$-Brauer character defined on the $p$-regular torsion units of $\Z G$, associated to
the natural algebra homomorphism $RG\rightarrow M_m(k)$. Moreover, if $g_1,...,g_k$ are representatives of the $p$-regular conjugacy classes of $G$ then
\begin{equation}\label{CharacterPA}
\chi(u) = \sum_{i = 1}^k \varepsilon_{g_i}(u)\chi(g_i).
\end{equation}
\end{enumerate}
\end{theorem}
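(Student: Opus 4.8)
The statement bundles four independently established facts about partial augmentations, so the plan is to prove each item separately, isolating its essential input; none relies on the others. For item (1) the forward implication is routine: if $u$ is conjugate in $\Q G$ to some $g\in G$, then $u^d$ is conjugate to $g^d$ for every divisor $d$ of $n$, and since partial augmentations are invariant under conjugation in $\Q G$ one obtains $\varepsilon_h(u^d)=\varepsilon_h(g^d)\in\{0,1\}$ for all $h$, so in particular $\varepsilon_h(u^d)\geq 0$. The converse is the substantial half and the main obstacle of the theorem: I would reduce rational conjugacy of $u$ to $\pm g$ to the existence of compatible local conjugacies of the finite cyclic group $\GEN{u}$, governed precisely by the signs of the partial augmentations of the powers of $u$, by means of the integral representation and lattice techniques of Weiss \cite{Weiss91}. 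This is exactly \cite[Theorem 2.5]{MarciniakRitterSehgalWeiss}, which I would cite rather than reprove.

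For item (2), the Berman--Higman theorem, write $u=\sum_{g\in G}a_g g$ with $\sum_g a_g=1$ and $u^n=1$, and note that $\varepsilon_1(u)=a_1$ since $\{1\}$ is a full conjugacy class, so it suffices to show $a_1=0$ when $u\neq 1$. The standard route, which I would follow, identifies $|G|\,a_1$ with the trace of $u$ in the regular representation; that matrix has finite order, hence is diagonalizable with roots of unity as eigenvalues, and the classical estimate on sums of roots of unity forces $a_1=0$, exactly as in \cite[Proposition 1.5.1]{EricAngel1}.

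Items (3) and (4) form the arithmetic core of the method and for both I would invoke Hertweck \cite{HertweckBrauer}. For (3) the idea is a local argument: using $u^n=1$ and reducing modulo suitable primes, one shows that the partial augmentation at a conjugacy class whose element order does not divide $n$ is forced to vanish. For (4), the crucial point is that a $p$-modular representation $RG\to M_m(k)$ extends along the natural inclusion of the $p$-regular torsion units of $\Z G$ into $RG$, so that the associated Brauer character is defined on such a $u$; expanding $\chi(u)$ over representatives of the $p$-regular classes then gives the trace identity \eqref{CharacterPA}. Proving that this extension is well defined is the genuinely delicate ingredient, and it is this extension property --- not any explicit computation --- that I expect to be the real difficulty and the reason the HeLP method applies to torsion units and not merely to group elements.
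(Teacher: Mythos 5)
The paper offers no proof of this theorem at all: it is a compilation of known results, each item justified solely by the citation attached to it, which is exactly the route you take. Your accompanying sketches are accurate in outline (one small quibble: the converse in item (1) is usually proved by a multiplicity/character-counting argument in \cite{MarciniakRitterSehgalWeiss} rather than by Weiss's lattice machinery, which is the deep input for the nilpotent case of the conjecture), but since you defer to the cited sources this does not affect correctness.
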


We collect the group theoretical properties of $\PSL(2,q)$ and its integral group ring relevant for us.

\begin{theorem}\label{GroupProperties}
Let $G = \PSL(2,q)$ where $q = t^f$ for some prime $t$ and let $d = \gcd(2,q)$.
\begin{enumerate}
\item\cite[Hauptsatz 8.27]{Huppert1}\label{PSL2GroupProperties} The following properties hold.

\begin{itemize}
\item The order of $G$ is $(q-1)q(q+1)/d$.
\item The orders of elements in $G$ are exactly $t$ and the divisors of $(q+1)/d$ and $(q-1)/d$.
\item Two cyclic subgroups of $G$ are conjugate in $G$ if and only if they have the same order.
\item If $g,h\in G$ with $|g|$ coprime with $t$ and multiple of $|h|$ then $h$ is conjugate in $G$ to an element $h_1$ of $\GEN{g}$ and the only elements of $\GEN{g}$ conjugate  to $h$ in $G$ are $h_1$ and $h_1^{-1}$.
In particular a conjugacy class of elements of order coprime with $t$ is a real conjugacy class.
\end{itemize}
\item\label{EigenvaluesSymmetry} If $u$ is a torsion element of $\V(\Z G)$ of order coprime with $t$, $\zeta$ is root of unity in an arbitrary field $F$ and $\Theta$ is an $F$-representation of $G$ then $\zeta$ and $\zeta^{-1}$ have the same multiplicity as eigenvalues of $\Theta(u)$. This follows from \eqref{PSL2GroupProperties} and the formulas for multiplicities of eigenvalues of torsion units as presented in \cite[Section 4]{HertweckBrauer}.

\item \label{PSL2PrimePower} Let $u \in \V(\Z G)$ of order $n$.
\begin{itemize}
\item If $\gcd(n,q)=1$ then $G$ has an element of order $n$ \cite[Proposition~6.7]{HertweckBrauer}.
\item If $n$ is a prime power not divisible by $t$, then $u$ is rationally conjugate to an element of $G$ \cite[Theorem 1]{SylowPSL}.
\item If moreover $f = 1$ and $n$ is divisible by $t$, then $n = t$ and $u$ is also rationally conjugate to an element of $G$
\cite[Propositions~6.1, 6.3]{HertweckBrauer}.
\end{itemize}

\item \cite[Lemma 1.2]{SylowPSL}\label{PSL2ModularCharacters} Let $n$ be a positive integer coprime with $t$ and let $g \in G$ be an element of order $n$. There exists a primitive $n$-th root of unity $\alpha$ in a field of characteristic $t$ such that for every positive integer $m$, there is a $t$-modular representation $\Theta_{m}$ of $G$ of degree $1+2m$ such that
 	$$\Theta_{m}(g)\text{ is conjugate to } \diag\left(1,\alpha,\alpha^{-1},\alpha^{2},\alpha^{-2},\ldots,\alpha^{m},\alpha^{-m}\right).$$
We denote by $\psi_m$ the Brauer character associated with $\Theta_m$.
\end{enumerate}
\end{theorem}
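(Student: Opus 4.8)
The plan is to establish the four items essentially independently, reducing each to the structure theory of $\mathrm{SL}(2,q)$ and $\PSL(2,q)$ together with the cited results. For item~\eqref{PSL2GroupProperties} I would invoke Dickson's classical description of the subgroups of $\PSL(2,q)$ as recorded in \cite{Huppert1}. The order formula and the list of element orders come from the fact that a Sylow $t$-subgroup is elementary abelian of order $q$, while every $t'$-element lies in a cyclic maximal torus, the image of either the split or the non-split torus of $\mathrm{SL}(2,q)$, of order $(q-1)/d$ or $(q+1)/d$. Conjugacy of equal-order cyclic subgroups and the description of which elements of $\GEN{g}$ are $G$-conjugate to a given $h$ both follow because the normalizer of such a torus induces inversion on it; in particular every class of $t'$-elements is real.

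For item~\eqref{EigenvaluesSymmetry} I would use the multiplicity formula of \cite[Section~4]{HertweckBrauer}, which writes the multiplicity of a fixed root of unity $\zeta$ as an eigenvalue of $\Theta(u)$ as a rational combination of the values $\chi(u^{d})$ for $d \mid n$, equivalently of the partial augmentations $\varepsilon_{g}(u^{d})$ through \eqref{CharacterPA}. Since by item~\eqref{PSL2GroupProperties} every $t'$-class of $G$ is real, the relevant character values are real and invariant under $\zeta \mapsto \zeta^{-1}$; replacing $\zeta$ by $\zeta^{-1}$ only permutes the summands, so the two multiplicities agree.

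Item~\eqref{PSL2PrimePower} I would simply cite: the existence of an element of order $n$ when $\gcd(n,q)=1$ from \cite[Proposition~6.7]{HertweckBrauer}, the prime-power case from \cite[Theorem~1]{SylowPSL}, and the case $f=1$ with $t \mid n$ from \cite[Propositions~6.1, 6.3]{HertweckBrauer}. The only genuinely constructive item is \eqref{PSL2ModularCharacters}. Here I would take the natural $2$-dimensional module $V$ of $\mathrm{SL}(2,q)$ over $k = \overline{\mathbb{F}}_t$ and set $\Theta_m = \operatorname{Sym}^{2m} V$, of dimension $1+2m$. Since $-I$ acts on $V$ by $-1$, it acts trivially on the even symmetric power, so $\Theta_m$ factors through $\PSL(2,q)$. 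A lift $\tilde g \in \mathrm{SL}(2,q)$ of $g$ is semisimple, hence diagonalizable as $\diag(\beta,\beta^{-1})$ with $\det = 1$; on the monomial basis of $\operatorname{Sym}^{2m} V$ it acts with eigenvalues $\beta^{2i-2m}$ for $0 \le i \le 2m$, that is $\diag(1,\alpha,\alpha^{-1},\dots,\alpha^{m},\alpha^{-m})$ with $\alpha = \beta^{2}$, and the same $\alpha$ works for every $m$.

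The main work lies in the bookkeeping of \eqref{PSL2ModularCharacters}, namely checking that $\alpha = \beta^{2}$ is a \emph{primitive} $n$-th root of unity. This is where the distinction between $\mathrm{SL}(2,q)$ and $\PSL(2,q)$ enters: if $N$ denotes the order of $\beta$, then $\diag(\beta,\beta^{-1})$ has order $N$ in $\mathrm{SL}(2,q)$, while its image $g$ has order $N$ when $N$ is odd and $N/2$ when $N$ is even; in either case $\operatorname{ord}(\alpha) = \operatorname{ord}(\beta^{2})$ equals exactly this order $n$, so $\alpha$ is primitive as required. Once this comparison is settled the eigenvalue list and the degree $1+2m$ are immediate, and no further computation is needed.
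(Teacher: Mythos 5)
The paper offers no proof of this theorem: it is a compendium of known results, each item justified by a citation (Huppert's Hauptsatz 8.27 for (1), Hertweck's multiplicity formulas for (2), and the quoted propositions of Hertweck and of \cite{SylowPSL} for (3) and (4)). Your sketches are correct and reproduce the standard arguments behind those citations; in particular you handle the one genuinely delicate point in (4) correctly --- that $\alpha=\beta^{2}$ has order exactly $|g|=n$, since passing from $\mathrm{SL}(2,q)$ to $\PSL(2,q)$ halves the order of $\diag(\beta,\beta^{-1})$ precisely when squaring halves the order of $\beta$ --- and your derivation of (2) from the reality of the $t$-regular classes via the formula of \cite[Section 4]{HertweckBrauer} is exactly the argument the paper alludes to.
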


As mentioned in the introduction, we actually do not use the HeLP Method
in its classical setting. We neither compute many inequalities involving traces as for example in the proofs of
\cite[Proposition~6.5]{HertweckBrauer} or \cite{KonovalovM22, SylowPSL}, since these formulas turn out to be too complicated in our setting.
Nor do we apply the standard equations obtained from character values on one side and
possible eigenvalues on the other side as e.g. in the proofs of \cite[Propositions 6.4, 6.7]{HertweckBrauer}, \cite{HertweckA6} or \cite[Lemma 2.2]{BachleMargolisEding07}, since there are too many
possibilities for these possible eigenvalues. Still this second strategy is closer to our approach.

\section{Number theoretical results} \label{SectionNumberTheoreticalResult}

In this section we prove two number theoretical results which are essential for our arguments and might be of independent interest.
Our first proof of Proposition~\ref{NT} below was very long.
We include a proof which was given to us by Hendrik Lenstra. We are very thankful to him for his simple and nice proof.

For a prime integer $p$ and a non-zero integer $n$ let $v_p(n)$ denote the valuation of $n$ at $p$, i.e. the maximal non-negative integer $m$ with $p^m\mid n$.
If, moreover, $n>0$ then $\zeta_n$ denotes a complex primitive $n$-th root of unity and $\Phi_n$ denotes the $n$-th cyclotomic polynomial, i.e. the minimal polynomial of $\zeta_n$ over $\Q$.

\begin{lemma}\label{LemaCyclotomicPolynomials}
	If $n$ and $m$ are positive integers and $p$ is a prime integer then $\Phi_{np^m}(\zeta_n)\in p\; \Z[\zeta_n]$.
\end{lemma}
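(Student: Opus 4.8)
The plan is to reduce the claim to a divisibility of polynomials over the field $\mathbb{F}_p$ with $p$ elements. Since $\Z[\zeta_n]\cong \Z[x]/(\Phi_n)$, reducing coefficients modulo $p$ gives a ring isomorphism $\Z[\zeta_n]/p\,\Z[\zeta_n]\cong \mathbb{F}_p[x]/(\overline{\Phi_n})$, where a bar denotes reduction of coefficients modulo $p$. Under this isomorphism the class of $\Phi_{np^m}(\zeta_n)$ corresponds to the residue of $\overline{\Phi_{np^m}}$ modulo $\overline{\Phi_n}$. Hence the assertion $\Phi_{np^m}(\zeta_n)\in p\,\Z[\zeta_n]$ is \emph{equivalent} to the purely polynomial statement that $\overline{\Phi_n}$ divides $\overline{\Phi_{np^m}}$ in $\mathbb{F}_p[x]$, and this is what I would prove.

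To establish this divisibility I would first record the behaviour of cyclotomic polynomials modulo $p$. Writing $n=p^a n'$ with $p\nmid n'$, I claim that in $\mathbb{F}_p[x]$ one has $\overline{\Phi_{p^k n'}}=\overline{\Phi_{n'}}^{\,\varphi(p^k)}$ for every $k\ge 0$, where $\varphi$ is Euler's function. This follows by induction from the classical recursions $\Phi_{pe}(x)=\Phi_e(x^p)$ when $p\mid e$ and $\Phi_{pe}(x)=\Phi_e(x^p)/\Phi_e(x)$ when $p\nmid e$, together with the Frobenius congruence $\Phi_e(x^p)\equiv \Phi_e(x)^p \pmod p$, which holds because $\Phi_e$ has integer coefficients. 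Indeed, the step $k=1$ gives $\overline{\Phi_{pn'}}=\overline{\Phi_{n'}}^{\,p}/\overline{\Phi_{n'}}=\overline{\Phi_{n'}}^{\,p-1}$, while for $k\ge 1$ the relation $\Phi_{p\cdot p^k n'}(x)=\Phi_{p^k n'}(x^p)$ combined with Frobenius raises the whole expression to the $p$-th power, which multiplies the exponent by $p$; since $\varphi(p^{k+1})=p\,\varphi(p^k)$, the induction closes (the case $k=0$ being trivial).

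Applying this with $k=a$ and with $k=a+m$ yields $\overline{\Phi_n}=\overline{\Phi_{n'}}^{\,\varphi(p^a)}$ and $\overline{\Phi_{np^m}}=\overline{\Phi_{n'}}^{\,\varphi(p^{a+m})}$. Because $m\ge 1$ forces $\varphi(p^{a+m})\ge\varphi(p^a)$, the polynomial $\overline{\Phi_n}$ divides $\overline{\Phi_{np^m}}$ in $\mathbb{F}_p[x]$; lifting a cofactor back to $\Z[x]$ and evaluating at $\zeta_n$, where $\Phi_n$ vanishes, gives $\Phi_{np^m}(\zeta_n)\in p\,\Z[\zeta_n]$ at once. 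The only delicate point is the mod-$p$ factorization of the second paragraph, and within it the Frobenius congruence; everything else is formal. I would be careful to include the case $a=0$ (that is, $p\nmid n$) in the same induction, where $\overline{\Phi_n}=\overline{\Phi_{n'}}$ and the inequality still reads $\varphi(p^{m})\ge 1=\varphi(1)$, so no separate treatment is needed and I expect no further obstruction.
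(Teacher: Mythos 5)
Your proof is correct, but it takes a genuinely different route from the one in the paper. You transport the whole question into $\mathbb{F}_p[x]$ via $\Z[\zeta_n]/p\,\Z[\zeta_n]\cong \mathbb{F}_p[x]/(\overline{\Phi_n})$ and then invoke the classical factorization $\Phi_{p^k n'}\equiv \Phi_{n'}^{\varphi(p^k)}\pmod p$ for $p\nmid n'$, so that the claim reduces to the inequality $\varphi(p^{a+m})\ge \varphi(p^a)$. The paper instead works directly in $\Z[\zeta_n]$ by induction on $v_p(n)$: when $p\nmid n$ it observes that the elements $\zeta_n\xi$, with $\xi$ ranging over the primitive $p^m$-th roots of unity, are among the roots of $\Phi_{np^m}$, so that $\Phi_{np^m}(\zeta_n)$ is divisible by $\prod_\xi(\zeta_n-\zeta_n\xi)=\prod_\xi(1-\xi)=\Phi_{p^m}(1)=p$; the inductive step uses only the identity $\Phi_{np^m}(X)=\Phi_{np^{m-1}}(X^p)$. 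Both arguments rely on that last identity; yours additionally needs the recursion $\Phi_{pe}(X)=\Phi_e(X^p)/\Phi_e(X)$ for $p\nmid e$ together with the Frobenius congruence, while the paper's needs the evaluation $\Phi_{p^m}(1)=p$. Your version leans more on standard cyclotomic machinery but yields the stronger structural fact that $\overline{\Phi_{np^m}}$ is an exact power of $\overline{\Phi_{n'}}$, whereas the paper's is more self-contained and never leaves $\Z[\zeta_n]$, which is where the lemma is applied. The points you flag as delicate (the Frobenius congruence, the cancellation of $\overline{\Phi_{n'}}$ in the domain $\mathbb{F}_p[x]$, and the case $a=0$) are all handled correctly as written.
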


\begin{proof}
We argue by induction on  $v_p(n)$.
Suppose first that $p\nmid n$ and let $S$ denote the set of primitive $p^m$-th roots of unity.
Then $\zeta_n\xi$ is a root of $\Phi_{np^m}(X)$ for every $\xi\in S$ and hence
$\prod_{\xi\in S}(X-\zeta_n\xi)$ divides $\Phi_{np^m}(X)$ in $\Z[\zeta_n][X]$.
Therefore
$$\Phi_{np^m}(\zeta_n)\in \prod_{\xi\in S}(\zeta_n-\zeta_n\xi)\;\Z[\zeta_n] =
\prod_{\xi\in S}(1-\xi)\;\Z[\zeta_n]=\Phi_{p^m}(1)\;\Z[\zeta_n]=p\;\Z[\zeta_n].$$
Suppose that $p \mid n$ and assume that the lemma holds with $n$ replaced by $\frac{n}{p}$.
Then $\Phi_{np^{m-1}}(\zeta_n^p)=\Phi_{\frac{n}{p}p^m}(\zeta_n^p)\in p\; \Z[\zeta_n^p]\subseteq p\; \Z[\zeta_n]$.
As $\Phi_{np^m}(X)=\Phi_{np^{m-1}}(X^p)$ and $\zeta_n^{p}$ is a primitive $\frac{n}{p}$-th root of unity, we have $\Phi_{np^m}(\zeta_n)=\Phi_{np^{m-1}}(\zeta_n^p)\in p\; \Z[\zeta_n]$.
\end{proof}

\begin{proposition}\label{NT}
Let $n$ be a positive integer.
Let $A_0,A_1,\dots,A_{n-1}$ be integers and for every positive integer $i$ set
\[\omega_i = \sum_{j=0}^{n-1} A_j \zeta_n^{ij}.\]
Let $d$ be a divisor of $n$ such that $\omega_{\frac{d}{q}}=0$ for every prime power $q$ dividing $d$ with $q\ne 1$.
Then $\omega_d\in d\;\Z[\zeta_n]$.
\end{proposition}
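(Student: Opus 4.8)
The plan is to encode the coefficients into a single polynomial and to read both the hypotheses and the conclusion as statements about this polynomial evaluated at powers of $\zeta_n$. Set $A(X)=\sum_{j=0}^{n-1}A_jX^j\in\Z[X]$, so that $\omega_i=A(\zeta_n^i)$ for every $i$. For a prime power $q=p^k$ dividing $d$ (with $1\le k\le v_p(d)$), the element $\zeta_n^{d/q}$ is a primitive root of unity of order $n/\gcd(n,d/q)=nq/d$, since $d/q$ divides $d$ and hence $n$. The hypothesis $\omega_{d/q}=0$ thus says that $A$ vanishes at a primitive $(nq/d)$-th root of unity, which forces its minimal polynomial $\Phi_{nq/d}$, monic with integer coefficients, to divide $A$ in $\Z[X]$.

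Next I would combine these divisibilities. As $(p,k)$ ranges over all pairs with $p\mid d$ and $1\le k\le v_p(d)$, the integers $nq/d=(n/d)p^k$ are pairwise distinct, because prime powers of different primes never coincide and, for a fixed prime, distinct exponents give distinct values. Hence the cyclotomic polynomials $\Phi_{(n/d)p^k}$ are distinct irreducible, thus pairwise coprime, factors of $A$, and their product divides $A$ in $\Z[X]$. That is, one may write
\[
A(X)=\Biggl(\prod_{p\mid d}\ \prod_{k=1}^{v_p(d)}\Phi_{(n/d)p^k}(X)\Biggr)\,Q(X),\qquad Q\in\Z[X].
\]

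Finally I would evaluate at $\zeta_n^d$. Put $N=n/d$, so that $\zeta_n^d$ is a primitive $N$-th root of unity lying in $\Z[\zeta_n]$, and
\[
\omega_d=A(\zeta_n^d)=\Biggl(\prod_{p\mid d}\ \prod_{k=1}^{v_p(d)}\Phi_{Np^k}(\zeta_n^d)\Biggr)\,Q(\zeta_n^d).
\]
This is where Lemma~\ref{LemaCyclotomicPolynomials} does the real work: applied with its $n$ taken to be $N$ and its $m$ taken to be $k$, it yields $\Phi_{Np^k}(\zeta_n^d)\in p\,\Z[\zeta_n^d]\subseteq p\,\Z[\zeta_n]$. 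Writing each factor as $p$ times an element of $\Z[\zeta_n]$ and multiplying, the full product lies in $\bigl(\prod_{p\mid d}p^{v_p(d)}\bigr)\Z[\zeta_n]=d\,\Z[\zeta_n]$, and multiplying by $Q(\zeta_n^d)\in\Z[\zeta_n]$ keeps us inside $d\,\Z[\zeta_n]$; the degenerate case $d=1$ gives an empty product and is immediate.

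The main obstacle is conceptual rather than computational: one must recognize that the several vanishing conditions supply exactly the right \emph{distinct} cyclotomic factors of $A$, and that after the substitution $X\mapsto\zeta_n^d$ the subscript $nq/d$ turns into $Np^k$, matching the shape $Np^m$ demanded by Lemma~\ref{LemaCyclotomicPolynomials}. The Lemma then contributes the prime $p$ once for each $k$, so the factors assemble with precisely the multiplicities $v_p(d)$ needed to reconstitute $d$.
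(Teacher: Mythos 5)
Your proof is correct and follows essentially the same route as the paper's: translate each vanishing hypothesis into divisibility of $A(X)$ by a cyclotomic polynomial $\Phi_{(n/d)p^k}$, multiply these pairwise coprime factors, evaluate at $\zeta_n^d$, and invoke Lemma~\ref{LemaCyclotomicPolynomials} to extract one factor of $p$ per exponent. The only differences are cosmetic: you spell out the pairwise-coprimality of the cyclotomic factors and the degenerate case $d=1$, which the paper leaves implicit.
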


\begin{proof}
	Let $k=\frac{n}{d}$ and consider the polynomial $f(X)=\sum_{j=0}^{n-1} A_jX^j$.
	We can take $\zeta_k=\zeta_n^d$, so that $\omega_d=f(\zeta_k)$.
	By hypothesis, for every prime $p$ and every positive integer $m$ with $p^m$ dividing $d$ we have $f(\zeta_{kp^m})=0$, or equivalently $\Phi_{kp^m}(X)$ divides $f(X)$ in $\Z[X]$.
	Thus $\prod_{p\mid d} \prod_{m=1}^{v_p(d)} \Phi_{kp^m}(X)$ divides $f(X)$ in $\Z[X]$.
	Therefore $\omega_d=f(\zeta_k)\in \prod_{p\mid d} \prod_{m=1}^{v_p(d)} \Phi_{kp^m}(\zeta_k)\;\Z[\zeta_k]$.
	By Lemma~\ref{LemaCyclotomicPolynomials}, each $\Phi_{kp^m}(\zeta_k)$ belongs to $p\;\Z[\zeta_n]$. As $d=\prod_{p\mid d} \prod_{m=1}^{v_p(d)} p$ we deduce that $\omega_d\in d\;\Z[\zeta_n]$, as desired.
\end{proof}

For a positive integer $n$ and a subfield $F$ of $\Q(\zeta_n)$, let $\Gamma_F$ denote a set of representatives of equivalence classes of the following equivalence relation defined on $\Z$:
	$$x\sim y \quad \text{ if and only if } \quad \zeta_n^x \text{ and } \zeta_n^y \text{ are conjugate in } \mathbb{Q}(\zeta_n) \text{ over } F.$$

\begin{corollary}\label{corollarydR}
	Let $n$ be a positive integer, let $F$ be a subfield of $\Q(\zeta_n)$ and let $R$ be the ring of integers of $F$.
	For every $x\in \Gamma_F$ let $B_x$ be an integer and for every integer $i$ define
	$$\omega_i = \sum_{x\in \Gamma_F} B_x \Tr_{\Q(\zeta_n)/F}(\zeta_n^{ix}).$$
	Let $d$ be a divisor of $n$ such that $\omega_{\frac{d}{q}}=0$ for every prime power $q$ dividing $d$ with $q\ne 1$.
	Then $\omega_d\in d\;R$.
\end{corollary}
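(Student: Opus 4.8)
The plan is to reduce the statement to Proposition~\ref{NT} by unfolding the trace into a Galois sum and repackaging it as an integer linear combination of powers of $\zeta_n$. Write $H = \Gal(\Q(\zeta_n)/F)$, which I identify with a subgroup of $(\Z/n\Z)^\times$ by letting $a\in H$ act through $\sigma_a(\zeta_n)=\zeta_n^a$. Since $\Q(\zeta_n)/\Q$ is abelian, the subextension $\Q(\zeta_n)/F$ is Galois, so for every integer $x$,
\[
\Tr_{\Q(\zeta_n)/F}(\zeta_n^{ix}) = \sum_{a\in H}\zeta_n^{iax}.
\]
By the very definition of the equivalence relation, $x\sim y$ holds exactly when $y\equiv ax\pmod n$ for some $a\in H$; thus $\Gamma_F$ is a set of representatives for the orbits of $H$ acting by multiplication on $\Z/n\Z$, and these orbits partition $\{0,1,\dots,n-1\}$.

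First I would rewrite $\omega_i$ in the shape demanded by Proposition~\ref{NT}. Substituting the trace formula gives $\omega_i = \sum_{x\in\Gamma_F} B_x \sum_{a\in H}\zeta_n^{iax}$. For a fixed $x$, the map $a\mapsto ax \bmod n$ is $|\mathrm{Stab}_H(x)|$-to-one onto the orbit $Hx$, whence $\sum_{a\in H}\zeta_n^{iax} = |\mathrm{Stab}_H(x)|\sum_{j\in Hx}\zeta_n^{ij}$. Defining, for each $j\in\{0,\dots,n-1\}$, the integer $A_j = |\mathrm{Stab}_H(x)|\,B_x$ where $x\in\Gamma_F$ is the unique representative with $j\in Hx$, I obtain $\omega_i = \sum_{j=0}^{n-1} A_j\zeta_n^{ij}$ for every $i$. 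The key point is that the stabilizer factor keeps the coefficients integral and that, the orbits being disjoint, each $A_j$ is unambiguously defined.

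With this normal form in hand, the hypothesis $\omega_{\frac{d}{q}}=0$ for all prime powers $q\mid d$ with $q\ne 1$ is precisely the hypothesis of Proposition~\ref{NT} for these integers $A_j$. Applying that proposition yields $\omega_d\in d\,\Z[\zeta_n]$.

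It then remains to descend from $\Z[\zeta_n]$ to $R$. By construction $\omega_d$ is a $\Z$-combination of values of $\Tr_{\Q(\zeta_n)/F}$, hence lies in $F$, and it is an algebraic integer. Writing $\omega_d = d\beta$ with $\beta\in\Z[\zeta_n]$, I get $\beta = \omega_d/d\in F$ as well, so $\beta\in F\cap\Z[\zeta_n]$. Since $\Z[\zeta_n]$ is the ring of integers of $\Q(\zeta_n)$, an element of $F$ lies in $\Z[\zeta_n]$ if and only if it is an algebraic integer of $F$; that is, $F\cap\Z[\zeta_n]=R$. Therefore $\beta\in R$ and $\omega_d = d\beta\in d\,R$, as desired. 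I expect the only genuinely delicate step to be the multiplicity bookkeeping in the second paragraph --- confirming that the stabilizers of $H$ turn the Galois sum into \emph{integer} coefficients $A_j$ --- whereas the final field-intersection descent is essentially formal.
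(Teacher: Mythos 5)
Your proof is correct and follows essentially the same route as the paper, whose entire proof is to apply Proposition~\ref{NT} to the integers $A_j=B_{\overline{j}}$ (with $\overline{j}$ the representative in $\Gamma_F$ of the class of $j$), leaving the descent from $d\,\Z[\zeta_n]$ to $d\,R$ implicit. Your stabilizer bookkeeping is in fact slightly more careful than the paper's: the choice $A_j=B_{\overline{j}}$ reproduces $\omega_i$ only when every $H$-orbit sum coincides with the corresponding trace, and it miscounts classes with nontrivial stabilizer such as that of $0$, where $\Tr_{\Q(\zeta_n)/F}(1)=[\Q(\zeta_n):F]$ while the orbit is a singleton --- your factor $\left|\mathrm{Stab}_H(x)\right|$ repairs exactly this.
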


\begin{proof}
Apply Proposition~\ref{NT} to the integers $A_x=B_{\overline{x}}$ with $\overline{x}$ denoting the class in $\Gamma_F$ containing $x$.
\end{proof}

In the remainder of this section we reserve the letter $p$ to denote positive prime integers.

We now introduce some notation for a positive integer $n$ which will be fixed throughout.
First we set
	$$\PP{n} = \prod_{p\mid n} p \quad \text{ and } \quad n_p=p^{v_p(n)}.$$
If moreover $x\in \Z$ then we set
\begin{eqnarray*}
	\MM{x}{n}  &=& \text{ representative of the class of } x \text{ modulo } n \text{ in the interval } \left(-\frac{n}{2}, \frac{n}{2}\right]; \\
	\AMM{x}{n} &=&\text{ the absolute value of } \MM{x}{n} \text{ and;}\\
	\gamma_n(x) &=& \prod_{\substack{p\mid n \\ \AMM{x}{n_p} <\frac{n_p}{2p}}} p.
\end{eqnarray*}

Next lemma collects two elementary properties involving this notation whose proofs are direct consequences of the definitions.

\begin{lemma}\label{Elementary}
	Let $p$ be a prime dividing $n$ and let $x,y \in \Z$.
	Then the following conditions hold:
	\begin{enumerate}
		\item\label{pdividinglevel} If $p\mid \gamma_n(x)$ then $\MM{x}{\frac{n_p}{p}} \equiv x \bmod n_p$.
		\item\label{LevelStep}
		Let $d \mid \PP{n}$ such that $x \equiv y \bmod \frac{n}{d}$. If $d$ divides both $\gamma_n(x)$ and $\gamma_n(y)$ then $x \equiv y \bmod n$.
	\end{enumerate}
\end{lemma}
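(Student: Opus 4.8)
The plan is to prove both statements as direct consequences of the definition of $\gamma_n$, establishing part~\eqref{pdividinglevel} first and then deducing part~\eqref{LevelStep} from it together with the Chinese Remainder Theorem. The guiding observation is that the condition $p\mid\gamma_n(x)$, which by definition reads $\AMM{x}{n_p}<\frac{n_p}{2p}$, is precisely what guarantees that the symmetric representative of $x$ modulo $\frac{n_p}{p}$ already determines $x$ modulo the full prime power $n_p$; this is exactly the content of \eqref{pdividinglevel}, and \eqref{LevelStep} is then a prime-by-prime bookkeeping argument.

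For \eqref{pdividinglevel}, I would set $a=\MM{x}{n_p}$ and $b=\MM{x}{\frac{n_p}{p}}$. Since $p\mid\gamma_n(x)$ we have $|a|=\AMM{x}{n_p}<\frac{n_p}{2p}$, while $b$ lies in $\left(-\frac{n_p}{2p},\frac{n_p}{2p}\right]$ by definition, so $|b|\le\frac{n_p}{2p}$. Both $a$ and $b$ are congruent to $x$ modulo $\frac{n_p}{p}$, hence $a\equiv b \bmod \frac{n_p}{p}$; but $|a-b|\le|a|+|b|<\frac{n_p}{p}$, so $a-b$ is a multiple of $\frac{n_p}{p}$ of absolute value strictly less than $\frac{n_p}{p}$ and must therefore vanish, forcing $a=b$. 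Thus $\MM{x}{\frac{n_p}{p}}=a\equiv x\bmod n_p$, as claimed.

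For \eqref{LevelStep}, since $n=\prod_{p\mid n}n_p$ with pairwise coprime factors, it suffices to show $x\equiv y\bmod n_p$ for every prime $p\mid n$. As $d\mid\PP{n}$, the integer $d$ is squarefree, and I would split into two cases. If $p\nmid d$ then $v_p\!\left(\frac{n}{d}\right)=v_p(n)$, hence $n_p\mid\frac{n}{d}$ and the hypothesis $x\equiv y\bmod\frac{n}{d}$ already yields $x\equiv y\bmod n_p$. If $p\mid d$ then $v_p\!\left(\frac{n}{d}\right)=v_p(n)-1$, so the hypothesis gives $x\equiv y\bmod\frac{n_p}{p}$, that is, $\MM{x}{\frac{n_p}{p}}=\MM{y}{\frac{n_p}{p}}$. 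Since $p\mid d$ divides both $\gamma_n(x)$ and $\gamma_n(y)$, part~\eqref{pdividinglevel} applies to each of $x$ and $y$, giving $x\equiv\MM{x}{\frac{n_p}{p}}=\MM{y}{\frac{n_p}{p}}\equiv y\bmod n_p$. Combining the two cases over all $p\mid n$ yields $x\equiv y\bmod n$.

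I expect no serious obstacle: the content is elementary once the notation is unwound. The only point that genuinely demands care is the inequality bookkeeping in \eqref{pdividinglevel}, where the \emph{strict} bound $|a|<\frac{n_p}{2p}$ coming from $p\mid\gamma_n(x)$ (as opposed to the merely non-strict bound on $|b|$) is exactly what is needed to push $|a-b|$ strictly below $\frac{n_p}{p}$ and conclude $a=b$. The structural point worth flagging is that the squarefreeness of $d$ makes $v_p\!\left(\frac{n}{d}\right)=v_p(n)-1$ for $p\mid d$, which aligns the modulus $\frac{n}{d}$ appearing in \eqref{LevelStep} with the one-step-down modulus $\frac{n_p}{p}$ of \eqref{pdividinglevel}, so that the first part feeds cleanly into the second.
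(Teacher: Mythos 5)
Your proof is correct. The paper omits the argument entirely, stating only that both parts are ``direct consequences of the definitions,'' and your write-up is precisely the intended elementary verification: the strict inequality $\AMM{x}{n_p}<\frac{n_p}{2p}$ forcing $\MM{x}{n_p}=\MM{x}{\frac{n_p}{p}}$ in part~(1), and the prime-by-prime case split on $p\mid d$ versus $p\nmid d$ (using the squarefreeness of $d$) in part~(2).
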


For integers $x$ and $y$ we define the following equivalence relation on $\Z$:
$$x \sim_n y \quad \Leftrightarrow \quad x \equiv \pm y \mod n.$$
We denote by $\Gamma_n$ a set of representatives of these equivalence classes.
Without loss of generality one may assume that $\Gamma_n=\Gamma_{\Q(\zeta_n+\zeta_n^{-1})}$.

In the remainder of the section we assume that $n$ is odd.
For $x$ and $y$ integers let
$$\alpha^{(n)}_x =  \zeta_n^x + \zeta_n^{-x}, \quad
\kappa^{(n)}_x = \begin{cases}2,&\text{if }x\equiv 0 \bmod n; \\
1,&\text{otherwise};\end{cases}
\quad\text{and}\quad
\delta_{x,y}^{(n)} = \begin{cases}
1, & \text{if } x\sim_n y  ; \\
0, & \text{otherwise}.
\end{cases}
$$
Moreover, $\Q\left(\alpha^{(n)}_1\right) = \Q(\zeta_n+\zeta_n^{-1})$ is the maximal real subfield of $\Q(\zeta_n)$ and $\Z\left[\alpha^{(n)}_1\right]=\Z[\zeta_n+\zeta_n^{-1}]$ is the ring of integers of $\Q\left(\alpha^{(n)}_1\right)$.

Let
    $$\B_n=\left\{x \in  \Z/n\Z : \AMM{x}{n_p}> \frac{n_p}{2p} \text{ for every } p\mid n \right\}
\quad \text{ and }\quad
    \mathcal{B}_n=\{ \alpha^{(n)}_b : b\in \B_n\}.$$

In the following proposition we prove that $\mathcal{B}_n$ is a $\Q$-basis of $\Q[\alpha_1^{(n)}]$.
For $x\in \Q[\alpha_1^{(n)}]$ and $b\in \mathcal{B}_n$, we use
$$C_b(x) = \text{coefficient of } \alpha^{(n)}_b \text{ in the expression of } x \text{ in the basis }\mathcal{B}_n.$$

We denote by $\mu$ the number theoretical M\"obius function.
\begin{proposition}\label{BaseLemma}
Let $n$ be a positive odd integer.
Then
\begin{enumerate}
\item $\mathcal{B}_n$ is a $\Z$-basis of $\Z\left[\alpha^{(n)}_1\right]$ and in particular, a $\Q$-basis of $\Q\left(\alpha^{(n)}_1\right)$.
\item\label{CoefLambda} If $b\in \B_n$ and $i\in \Z$ then
	$C_b(\alpha^{(n)}_i) = \kappa^{(n)}_i \cdot \mu(\gamma(i))\cdot \delta_{b,i}^{(n/\gamma(i))}.$
\end{enumerate}
\end{proposition}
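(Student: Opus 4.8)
The plan is to reduce both statements to a single explicit identity in $\Z[\alpha^{(n)}_1]$ and then read off the two assertions; throughout I assume $n>1$ (for $n=1$ the statement is degenerate). The identity I would prove is
\begin{equation}\label{SketchKeyIdentity}
\alpha^{(n)}_i \;=\; \kappa^{(n)}_i\,\mu(\gamma_n(i))\sum_{b}\delta^{(n/\gamma_n(i))}_{b,i}\,\alpha^{(n)}_b ,\qquad i\in\Z,
\end{equation}
where $b$ runs over a set of representatives of the pairs $\{b,-b\}$ with $b\in\B_n$; this says exactly that the coefficient of $\alpha^{(n)}_b$ in $\alpha^{(n)}_i$ is the quantity in \eqref{CoefLambda}. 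Before proving it I record the dimension count. Since the condition defining $\B_n$ is imposed separately at each prime $p\mid n$, the Chinese Remainder Theorem gives $|\B_n|=\prod_{p\mid n}c_p$ with $c_p=\#\{y\bmod n_p:\AMM{y}{n_p}>n_p/2p\}$; as $n_p/p$ is an odd integer there is no boundary case, exactly $n_p/p$ residues are ``small'', and so $c_p=n_p-n_p/p=\varphi(n_p)$ and $|\B_n|=\varphi(n)$. Because $0\notin\B_n$ and $\B_n=-\B_n$, the map $b\mapsto\alpha^{(n)}_b$ is two-to-one onto $\mathcal B_n$, whence $|\mathcal B_n|=\varphi(n)/2=[\Q(\alpha^{(n)}_1):\Q]$.

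I would prove \eqref{SketchKeyIdentity} by induction on the number of primes dividing $\gamma_n(i)$, the engine being the cyclotomic relation
\[
\sum_{j=0}^{p-1}\alpha^{(n)}_{\,i+j\frac{n}{p}}=\zeta_n^{\,i}\sum_{j=0}^{p-1}\zeta_p^{\,j}+\zeta_n^{-i}\sum_{j=0}^{p-1}\zeta_p^{-j}=0\qquad(p\mid n).
\]
If $\gamma_n(i)=1$ then $i\in\B_n$ and $i\not\equiv0\bmod n$, so $\kappa^{(n)}_i=1$ and the right-hand side of \eqref{SketchKeyIdentity} collapses to the single term $\alpha^{(n)}_i$. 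For the inductive step I choose a prime $p\mid\gamma_n(i)$, so $\AMM{i}{n_p}<n_p/2p$, and the relation gives $\alpha^{(n)}_i=-\sum_{j=1}^{p-1}\alpha^{(n)}_{i+jn/p}$. Using Lemma~\ref{Elementary}\eqref{pdividinglevel} one checks that for $j\neq0$ the residues $i+j\tfrac{n}{p}$ are precisely the remaining lifts of $i\bmod n_p/p$, each of which is large at $p$ and unchanged at every other prime; hence $\gamma_n(i+jn/p)=\gamma_n(i)/p$ and $i+jn/p\not\equiv0\bmod n$. Applying the induction hypothesis to each summand and using $-\mu(\gamma_n(i)/p)=\mu(\gamma_n(i))$ rewrites $\alpha^{(n)}_i$ as $\mu(\gamma_n(i))\sum_b N_b\,\alpha^{(n)}_b$, the sum again over representatives.

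It then remains to identify the multiplicity $N_b$ with $\kappa^{(n)}_i\,\delta^{(n/\gamma_n(i))}_{b,i}$, and this bookkeeping is the main obstacle. Reducing the congruences modulo $n/\gamma_n(i)$ shows at once that $N_b=0$ unless $b\equiv\pm i\bmod n/\gamma_n(i)$. When $i\not\equiv0\bmod n/\gamma_n(i)$ exactly one index $j$ (and one sign) hits an admissible representative $b$, so $N_b=1=\kappa^{(n)}_i$; the key point is that $b\in\B_n$ is large at $p$ while $i$ is small, forcing $b\not\equiv i$ modulo the larger modulus $np/\gamma_n(i)$ and guaranteeing the surviving index is nonzero. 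The delicate case is $b\equiv0\bmod n/\gamma_n(i)$: a short argument with Lemma~\ref{Elementary} (smallness at every prime of $\gamma_n(i)$ forces $i\equiv0\bmod n_p$ for all such $p$) shows this can occur only when $i\equiv0\bmod n$, i.e.\ exactly when $\kappa^{(n)}_i=2$, and there the $+$ and $-$ contributions land on the same representative $b$ through two distinct indices $j$, yielding $N_b=2$. Thus $N_b=\kappa^{(n)}_i\,\delta^{(n/\gamma_n(i))}_{b,i}$ in every case, which proves \eqref{SketchKeyIdentity}.

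Finally I would deduce statement (1). The identity \eqref{SketchKeyIdentity} exhibits every $\alpha^{(n)}_i$ as a $\Z$-combination of $\mathcal B_n$. On the other hand the $\Z$-span $M$ of $\{\alpha^{(n)}_i:i\in\Z\}$ is stable under multiplication by $\alpha^{(n)}_1$, since $\alpha^{(n)}_1\alpha^{(n)}_i=\alpha^{(n)}_{i+1}+\alpha^{(n)}_{i-1}$, hence is an ideal of $\Z[\alpha^{(n)}_1]$; because $N_{\Q(\alpha^{(n)}_1)/\Q}(\alpha^{(n)}_1)^2=\Phi_n(-1)=1$ for odd $n>1$, the element $\alpha^{(n)}_1$ is a unit and $M=\Z[\alpha^{(n)}_1]$. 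Therefore $\mathcal B_n$ generates $\Z[\alpha^{(n)}_1]$, and a generating set whose cardinality $\varphi(n)/2$ equals the rank of this free $\Z$-module is automatically a $\Z$-basis (a surjective endomorphism of a finitely generated free module is an isomorphism), and in particular a $\Q$-basis of $\Q(\alpha^{(n)}_1)$. This proves statement (1), makes the coefficients $C_b$ well defined, and turns \eqref{SketchKeyIdentity} into the formula of \eqref{CoefLambda}.
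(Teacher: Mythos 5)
Your proof is correct and follows essentially the same route as the paper: the heart in both cases is the identity $\alpha^{(n)}_i=\kappa^{(n)}_i\,\mu(\gamma_n(i))\sum_b\delta^{(n/\gamma_n(i))}_{b,i}\,\alpha^{(n)}_b$, obtained from the vanishing sum $\sum_{j=0}^{p-1}\zeta_{n_p}^{\,i+j n_p/p}=0$ for each $p\mid\gamma_n(i)$, combined with the cardinality bound $|\mathcal{B}_n|\le\varphi(n)/2$. The only (presentational) differences are that the paper derives the identity in one step at the level of $\zeta_n^i$ via the factorization $\zeta_n=\prod_{p\mid n}\zeta_{n_p}$ and then symmetrizes to the $\alpha$'s, whereas you induct on the primes dividing $\gamma_n(i)$ working with the $\alpha$'s throughout, and you additionally make explicit (via $\alpha^{(n)}_1$ being a unit, or one could instead use $\sum_{j=1}^{(n-1)/2}\alpha^{(n)}_j=-1$) why the $\alpha^{(n)}_i$ span $\Z\left[\alpha^{(n)}_1\right]$, a point the paper leaves implicit.
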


\begin{proof}
It is easy to see that $|\mathcal{B}_n|\leq \frac{\varphi(n)}{2}=[\Q\left(\alpha^{(n)}_1\right):\Q]$. Thus it is enough to prove the following equality

	$$\alpha^{(n)}_i=\kappa^{(n)}_i \mu(\gamma(i)) \sum_{b\in \B_n, b\sim_{\frac{n}{\gamma(i)}} i}  \alpha^{(n)}_b.$$
	Actually we will show
	$\zeta_n^i = \mu(\gamma(i)) \sum_{b \in \B_n,  b\equiv i \bmod \frac{n}{\gamma(i)}}  \zeta_n^b$,
	which implies the desired expression of $\alpha^{(n)}_i$.		
Indeed, for every $p\mid n$ let $\zeta_{n_p}$ denote the $p$-th part of $\zeta_n$, i.e. $\zeta_{n_p}$ is a primitive $n_p$-th root of unity and $\zeta_n = \prod_{p \mid n} \zeta_{n_p}$.
Let $J$ be the set of tuples $(j_p)_{p\mid \gamma(i)}$ satisfying $j_p\in \{1,\dots, p-1\}$ for every $p\mid \gamma(i)$.
For every $j\in J$ let $b_j\in \Z/n\Z$ given by
	$$b_j\equiv \begin{cases}
	i+j_p\frac{n_p}{p} \mod n_p, & \text{if } p\mid \gamma(i); \\
	i \mod n_p, & \text{otherwise}.
	\end{cases}$$
Then $\{b_j : j\in J \}$ is the set of elements $b$ in $\B_n$ satisfying $i\equiv b \bmod \frac{n}{\gamma(i)}$.
	From
	$$0 = \zeta_{n_p}^i \left(1 + \zeta_{n_p}^{\frac{n_p}{p}} + \zeta_{n_p}^{\frac{2n_p}{p}}
    + \dots + \zeta_{n_p}^{\frac{(p-1)n_p}{p}} \right)$$
we obtain $\zeta_{n_p}^i = - \sum_{j_p=1}^{p-1} \zeta_{n_p}^{i+j_p\frac{n_p}{p}}$.
Therefore
	$$\zeta_n^i =
	\prod_{\substack{p \mid n \\ p \nmid \gamma(i)}} \zeta_{n_p}^i \prod_{\substack{p \mid n \\ p \mid \gamma(i)}}
	\left(-\sum_{j_p=1}^{p-1 }  \zeta_{n_p}^{i+j_p\frac{n_p}{p}} \right) =
	\mu(\gamma(i)) \sum_{j\in J} \zeta_n^{b_j} =
	\mu(\gamma(i)) \sum_{\substack{b\equiv i \bmod \frac{n}{\gamma(i)} \\ b\in\B_n} } \zeta_n^b.$$
\end{proof}

\section{Proof of Theorem~\ref{Main}}\label{SectionProofOfMainResult}

In this section we prove Theorem~\ref{Main}.
In the remainder, set $G=\PSL (2,t^f)$ with $t$ a prime.
Our goal is to prove that any element $u$ of order $n$ in $\V(\Z G)$, where $n$ is greater than $1$ and coprime with $2t$, is rationally conjugate to an element of $G$. By Theorem~\ref{GroupProperties}.(\ref{PSL2PrimePower}) we may also assume that $n$ is not a prime power.

As the order $n$ of $u$ is fixed throughout, we simplify the notation of the previous section by setting
$$\gamma=\gamma_n, \quad \alpha_x=\alpha^{(n)}_x, \quad \kappa_x=\kappa_{x}^{(n)}, \quad \B=\B_n, \quad \mathcal{B}=\mathcal{B}_n.$$

We argue by induction on $n$. So we assume that $u^d$ is rationally conjugate to an element of $G$ for every divisor $d$ of $n$ with $d\ne 1$.

We will use the representations $\Theta_m$ and Brauer characters $\psi_m$ introduced in Theorem~\ref{GroupProperties}.(\ref{PSL2ModularCharacters}). As usual in modular representation theory, a bijection between the complex roots of unity of order coprime with $t$ and the roots of unity of the same order in a field of characteristic $t$ has been fixed a priori. In this sense we will identify the eigenvalues of $\Theta_m$ and the summands in $\psi_m$.
Since units of prime order in $\V(\Z G)$ are rationally conjugate to elements of $G$ by Theorem~\ref{GroupProperties}.(\ref{PSL2PrimePower}), we know that the kernel of $\Theta_1$ on $\langle u \rangle$ is trivial and hence  $\Theta_1(u)$ has order $n$.
As the values of $\psi_1$ on $t$-regular elements of $G$ are real, by Theorem~\ref{GroupProperties}.(\ref{PSL2GroupProperties}) and Theorem~\ref{Known}.(\ref{BrauerCharacterOnUnit}), the set of eigenvalues of $\Theta_{1}(u)$ is closed under
taking inverses (counting multiplicities).
Therefore, $\Theta_{1}(u)$ is conjugate to $\diag(1, \zeta, \zeta^{-1})$ for a suitable primitive $n$-th root of unity $\zeta$.
Hence by Theorem~\ref{GroupProperties} there exists an element $g_{0}\in G$ of order $n$ such that $\Theta_{1}(g_{0})$ and $\Theta_{1}(u)$ are conjugate.
From now on we abuse the notation and consider $\zeta$ both as a primitive $n$-th root of unity in a field of characteristic $t$ and as a complex primitive $n$-th root of unity.
Then for any positive integer $m$ we have that
\begin{equation}\label{ThetaDiag}
\Theta_{m}(g_{0})\text{ is conjugate to }
\diag\left(1, \zeta, \zeta^{-1}, \zeta^{2}, \zeta^{-2}, \ldots, \zeta^{m}, \zeta^{-m} \right),
\end{equation}
and for every integer $i$ we have
\begin{equation}\label{ChiExpresion}
\psi_m(g_{0}^{i})=\sum_{j=-m}^{m}\zeta^{ij} = 1 + \sum_{j=1}^m \alpha_{ij}.
\end{equation}
The element $g_0 \in G$ and the primitive $n$-th root of unity $\zeta$ will be fixed throughout.

By Theorem~\ref{GroupProperties}.(\ref{PSL2GroupProperties}), $x\mapsto (g_0^x)^G$ defines a bijection from $\Gamma_n$ to the set of conjugacy classes of $G$ formed by elements of order dividing $n$.
For an integer $x$ (or $x\in\Gamma_n$) we set
$$\varepsilon_x = \varepsilon_{g_0^x}(u) \quad \text{ and }\quad
\lambda_x = \sum_{i\in \Gamma_n} \varepsilon_{i} \alpha_{ix}.$$
By Theorem~\ref{Known}, $u$ is rationally conjugate to an element of $G$ if and only if $\varepsilon_x \ge 0$ for every $x\in \Gamma_n$.

\begin{lemma}\label{LemmaLambdaAlpha}
	$u$ is rationally conjugate to $g_0$ if and only if
	\begin{equation}\label{lambdaequalalpha}
	\lambda_i = \alpha_i, \ \text{for any positive integer } i.
	\end{equation}
\end{lemma}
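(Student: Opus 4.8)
The plan is to translate the statement ``$u$ is rationally conjugate to $g_0$'' into a statement purely about the partial augmentations $\varepsilon_x$, and then show this is equivalent to the family of equalities $\lambda_i=\alpha_i$. First I would recall that by Theorem~\ref{Known}.(\ref{MRSW}), $u$ is rationally conjugate to \emph{some} element of $G$ precisely when all partial augmentations of all powers $u^d$ are non-negative, and that the element is forced to be $g_0$ (the element whose order is $n$ and whose image under $\Theta_1$ matches $\Theta_1(u)$) since rational conjugacy preserves the conjugacy class and $g_0^G$ is determined by $|g_0|=n$. So the real content is that $u$ is rationally conjugate to $g_0$ exactly when $\varepsilon_x=\delta^{(n)}_{x,1}\cdot(\text{something})$; more precisely, $u$ is rationally conjugate to $g_0$ iff $\varepsilon_{g_0^x}(u)=\varepsilon_{g_0^x}(g_0)$ for all $x$, and since $g_0$ has order $n$ and its conjugacy class is real, $\varepsilon_{g_0^x}(g_0)=\delta^{(n)}_{x,1}$, i.e.\ it equals $1$ when $x\sim_n 1$ and $0$ otherwise.

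Next I would connect the $\lambda_i$ to character values. By definition $\lambda_x=\sum_{i\in\Gamma_n}\varepsilon_i\alpha_{ix}$, and comparing with \eqref{ChiExpresion} and the formula $\psi_1(u)=\sum_{i}\varepsilon_{g_0^i}(u)\psi_1(g_0^i)$ from Theorem~\ref{Known}.(\ref{BrauerCharacterOnUnit}), I expect $1+\lambda_i$ to be exactly $\psi_1(u^i)$, while $1+\alpha_i=\psi_1(g_0^i)$. Thus the condition $\lambda_i=\alpha_i$ for all positive $i$ says precisely that $\psi_1(u^i)=\psi_1(g_0^i)$ for all $i$, i.e.\ $\Theta_1(u)$ and $\Theta_1(g_0)$ have equal eigenvalue-multiplicities (using that the collection of power-traces of a root-of-unity diagonal matrix determines the multiset of eigenvalues). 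Since by construction $\Theta_1(u)$ is already conjugate to $\diag(1,\zeta,\zeta^{-1})=\Theta_1(g_0)$, one must be careful: the point is rather to run the argument in the other direction, extracting the $\varepsilon_x$.

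The cleaner route, which I would actually carry out, is to view $\{\alpha_i\}$ and $\{\lambda_i\}$ through the basis $\mathcal{B}$ of Proposition~\ref{BaseLemma}. For the forward direction, if $u$ is rationally conjugate to $g_0$ then $\varepsilon_i=\delta^{(n)}_{i,1}$, so $\lambda_x=\sum_i\delta^{(n)}_{i,1}\alpha_{ix}=\alpha_x$ directly (since the only surviving term is $i=1$, up to the $\sim_n$ identification which matches the definition of $\alpha_x=\alpha^{(n)}_x$). For the converse, assume $\lambda_i=\alpha_i$ for all positive $i$; expanding both sides in the $\Q$-basis $\mathcal{B}$ via the coefficient functional $C_b$ of Proposition~\ref{BaseLemma}.(\ref{CoefLambda}) yields, for each $b\in\B$, a linear system in the unknowns $\varepsilon_i$ whose matrix is (up to the $\kappa$ and $\mu(\gamma)$ factors) the identity-like incidence of the relation $\sim_{n/\gamma(i)}$. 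Solving this system should force $\varepsilon_i=\delta^{(n)}_{i,1}$, whence all partial augmentations of $u$ agree with those of $g_0$; combined with Theorem~\ref{Known}.(\ref{OrdersDivides}) and Theorem~\ref{Known}.(\ref{MRSW}) this gives rational conjugacy to $g_0$.

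The main obstacle I anticipate is the converse: verifying that the equalities $\lambda_i=\alpha_i$, read off coefficient-by-coefficient in the basis $\mathcal{B}$, actually pin down every $\varepsilon_x$ and not merely a proper subset. This hinges on the precise combinatorics of $C_b(\alpha^{(n)}_i)=\kappa^{(n)}_i\,\mu(\gamma(i))\,\delta^{(n/\gamma(i))}_{b,i}$ from Proposition~\ref{BaseLemma} together with the elementary congruence facts in Lemma~\ref{Elementary}, and one must check that the resulting linear map $(\varepsilon_i)\mapsto(\lambda_x)$ is injective on the relevant space, so that matching $\lambda_i=\alpha_i$ (the image of the indicator of the class of $1$) has the unique solution $\varepsilon_i=\delta^{(n)}_{i,1}$; handling the index bookkeeping between $\Gamma_n$ and $\B$ correctly, especially at primes $p$ with $p\mid\gamma(i)$, is where the care is required.
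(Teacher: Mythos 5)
Your forward direction is fine and agrees with the paper. The converse is where the problems lie, and the decisive step is exactly the one you flag as ``the main obstacle'' and leave unproved: you propose to recover the $\varepsilon_x$ by expanding $\lambda_i=\alpha_i$ coefficient-by-coefficient in the basis $\mathcal{B}$ and then arguing that the resulting incidence system (with its $\kappa$ and $\mu(\gamma)$ factors) is injective. The paper needs none of that machinery. Since partial augmentations are class functions and the only elements of $\GEN{g_0}$ conjugate to $g_0^{j}$ are $g_0^{\pm j}$, one can write $\lambda_i=\sum_{j=0}^{n-1}\varepsilon_{g_0^j}(u)\zeta_n^{ij}$ and $\alpha_i=\sum_{j=0}^{n-1}\varepsilon_{g_0^j}(g_0)\zeta_n^{ij}$, and the invertibility of the Vandermonde matrix $(\zeta_n^{ij})_{1\le i,j\le n}$ immediately forces $\varepsilon_{g_0^j}(u)=\varepsilon_{g_0^j}(g_0)$ for all $j$. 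The $C_b$/Proposition~\ref{BaseLemma} analysis is reserved in the paper for the hard part of the main argument, where one only has congruences modulo $d$ rather than exact equalities; importing it here both overcomplicates the lemma and leaves the injectivity you need unestablished. (Your side remark that $1+\lambda_i$ should equal $\psi_1(u^i)$ is also unjustified: $\psi_1(u^i)$ is governed by the partial augmentations of $u^i$, not of $u$.)

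There are two further gaps in the concluding step. First, even after $\varepsilon_{g_0^x}(u)=\delta^{(n)}_{x,1}$ is known, Theorem~\ref{Known}.(\ref{MRSW}) requires $\varepsilon_g(u^d)\ge 0$ for \emph{all} divisors $d$ of $n$; you state the criterion correctly at the outset but never supply the hypothesis for $d\ne 1$. The paper gets it from the standing induction hypothesis of Section~\ref{SectionProofOfMainResult} (every $u^d$ with $d\mid n$, $d\ne 1$, is already rationally conjugate to an element of $G$), and you must invoke this as well. Second, your claim that the target element is forced to be $g_0$ because ``$g_0^G$ is determined by $|g_0|=n$'' is false: in $\PSL(2,q)$ only cyclic \emph{subgroups} of equal order are conjugate, and there are $\varphi(n)/2$ distinct conjugacy classes of elements of order $n$. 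The correct argument, as in the paper, is that once $u$ is rationally conjugate to some $g\in G$ one has $\varepsilon_{g_0}(g)=\varepsilon_{g_0}(u)=1\ne 0$, which pins $g$ to the class of $g_0$.
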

\begin{proof}
If $u$ is rationally conjugate to $g_0$, then $\varepsilon_1 = 1$ and $\varepsilon_x = 0$ for any $x \in \Gamma_n\setminus \{1\}$.
Therefore (\ref{lambdaequalalpha}) holds.
Conversely, assume that (\ref{lambdaequalalpha}) holds.
For $ v \in \V(\Z G)$ of order dividing $n$ let $\lambda_i'( v) = \sum_{x \in \Gamma_n} \varepsilon_{g_0^x}(v) \alpha_{xi}$.
Then $\lambda_i = \lambda_i'(u) = \sum_{j=0}^{n-1} \varepsilon_{g_0^j}(u) \zeta_n^{ij}$ and $\alpha_i = \sum_{j=0}^{n-1} \varepsilon_{g_0^j}(g_0) \zeta_n^{ij}$.
As the Vandermonde matrix $(\zeta_n^{ij})_{1\le i,j\le n}$ is invertible we deduce that $\varepsilon_{g_0^j}(u) = \varepsilon_{g_0^j}(g_0)$ for every $j \in \Gamma_n$. So $\varepsilon_j=\varepsilon_{g_0^j}(u)=\varepsilon_{g_0^j}(g_0)=0$ for every  $j\in \Gamma_n\setminus \{1\}$ and $\varepsilon_1=1$. As we are assuming that if $d$ is a divisor of $n$ different from $1$ then $u^d$ is rationally conjugate to an element of $G$, we also have $\varepsilon_g(u^d)\geq 0$ for every $g\in G$.
Thus $u$ is rationally conjugate to an element of $g\in G$ by Theorem~\ref{Known}.(\ref{MRSW}).
Then $\varepsilon_{g_0}(g)=\varepsilon_{g_0}(u)=1$ and therefore $g$ is conjugate to $g_0$ in $G$.
We conclude that $u$ and $g_0$ are rationally conjugate.
\end{proof}

By Lemma~\ref{LemmaLambdaAlpha}, in order to achieve our goal it is enough to prove (\ref{lambdaequalalpha}).
We argue by contradiction, so suppose that $\lambda_d\ne \alpha_d$ for some positive integer $d$ which we assume to be minimal with this property.
Observe that if $\lambda_i = \alpha_i$ and $j$ is an integer such that $\gcd(i, n) =
\gcd(j, n)$, then there exists  $\sigma \in \Gal(\Q (\alpha_1)/\Q )$ such that $\sigma(\alpha_i) = \alpha_j$ and applying $\sigma$ to the
equation $\lambda_i = \alpha_i$ we obtain $\lambda_j = \alpha_j$.  	
This implies that $d$ divides $n$. Note that $\alpha_1 = \lambda_1$ by our choice of $g_0$ and hence $d\ne 1$.
Moreover, $d\ne n$ because $\lambda_n = 2 \sum_{x\in \Gamma_n} \varepsilon_x = 2 = \alpha_n$ as the augmentation of $u$ is 1.

We claim that
\begin{equation}\label{KeyEquality}
\lambda_d = \alpha_d + d\tau \text{ for some } \tau \in \Z[\alpha_1].
\end{equation}
Indeed, for any $x\in\Gamma_{n}$ let $B_x=\varepsilon_x -1$ if $x\sim_n 1$ and $B_x = \varepsilon_x$ otherwise. Then for any integer $i$ we have $\lambda_i - \alpha_i = \sum_{x\in\Gamma_n} B_x \Tr_{\Q(\zeta) / \Q(\alpha_1)} \left(\zeta^{ix}\right)$. Therefore, applying Corollary~\ref{corollarydR} for $F=\Q(\alpha_1)$, $R=\Z[\alpha_1]$ and $\omega_i = \lambda_i -\alpha_i$, the claim follows.

By (\ref{ChiExpresion}) we have, using Theorem \ref{Known}.(\ref{BrauerCharacterOnUnit}),
\begin{equation}\label{PsidCharacterValues}
\psi_d(g_0)=1+\sum_{i=1}^d\alpha_{i} \quad \text{and} \quad \psi_d(u) =  \sum_{x\in \Gamma_n} \varepsilon_x \psi_d(g_0^x) = \sum_{x\in \Gamma_n} \varepsilon_x \left(1+\sum_{i=1}^d \alpha_{ix}\right) = 1+\sum_{i=1}^d\lambda_i.
\end{equation}

Combining this with (\ref{KeyEquality}) and the minimality of $d$, we obtain $\psi_d(u)=\psi_d(g_0)+d\tau$.
Furthermore, $\tau\ne 0$, as $\lambda_d\ne \alpha_d$.
Therefore
\begin{equation}\label{dmu}
C_b(\psi_d(u)-1) \equiv  C_b(\psi_d(g_0)-1) \bmod d \quad \text{ for every } b\in \B
\end{equation}
and
\begin{equation}\label{Difference}
d\le \left|C_{b_0}(\psi_d(u)-1) - C_{b_0}(\psi_d(g_0)-1)\right| \quad \text{ for some } b_0\in\B.
\end{equation}

The bulk of our argument relies on an analysis of the eigenvalues of $\Theta_d(u)$ and the induction hypothesis on $n$ and $d$.
More precisely, we will use (\ref{dmu}) and (\ref{Difference}) to obtain a contradiction by comparing the eigenvalues of $\Theta_d(g_0)$ and $\Theta_d(u)$.
Of course we do not know the eigenvalues of the latter but we know the eigenvalues of each $\Theta_d(g_0^i)$.
Moreover, if $c$ is a divisor of $n$ with $c\ne 1$ then $u^c$ is rationally conjugate to an element $g$ of $G$.
Then $\Theta_1(g)$, $\Theta_1(u^c)$ and $\Theta_1(g_0^c)$ are conjugate in $M_3(F)$, for a suitable field $F$, and as $\Theta_1$ is injective on $\GEN{g_0}$ and $g$ is conjugate to an element of $\GEN{g_0}$ we conclude that $u^c$ is conjugate to $g_0^c$. Thus we know the eigenvalues of $\Theta_d(u^c)$. This has consequences for the eigenvalues of $\Theta_d(u)$.

To be more precise we fix $\nu_1,\dots,\nu_d\in \Gamma_n$ (with repetitions if needed) such that the eigenvalues of $\Theta_d(u)$ with multiplicities are $1,\zeta^{\pm \nu_1},\dots,$ $\zeta^{\pm \nu_d}$.
This is possible by the last statement of Theorem~\ref{GroupProperties}.\eqref{PSL2GroupProperties}.
By the above paragraph, if $c\mid n$ with $c\ne 1$ then the lists $(c\nu_i)_{1\le i\le d}$ and $(ci)_{1\le i\le d}$ represent the same elements in $\Gamma_n$, up to ordering, and hence
$(\nu_i)_{1\le i\le d}$ and $(i)_{1\le i\le d}$ represent the same elements of $\Gamma_{\frac{n}{c}}$, up to ordering.
We express this by writing
$$(\nu_i)\sim_{\frac{n}{c}} (i) \quad \text{ for every } c \mid n \text{ with } c\ne 1.$$
This provides restrictions on $d$, $n$ and the $\nu_i$.

Moreover, $C_b(\psi_d(u)-1)$ and $C_b(\psi_d(g_0)-1)$ are the coefficients of $\alpha_b$ in the expression in the basis $\mathcal{B}$ of $\alpha_{\nu_1}+\dots+\alpha_{\nu_d}$ and $\alpha_1+\dots+\alpha_d$, respectively.
By \eqref{PsidCharacterValues} and Proposition~\ref{BaseLemma} we obtain for every $b\in\B$ that
\begin{equation}\label{Separated}
C_b(\psi_d(g_0)-1) = \sum_{i=1}^d \mu(\gamma(i))\cdot \delta_{b,i}^{(n/\gamma(i))} \quad \text{ and } \quad C_b(\psi_d(u)-1) = \sum_{i=1}^d \kappa_{\nu_i}\cdot\mu(\gamma(\nu_i))\cdot\delta_{b,\nu_i}^{(n/\gamma(\nu_i))}.
\end{equation}
and so
\begin{equation}\label{cbchiuMenos1}
C_b(\psi_d(u)-1) - C_b(\psi_d(g_0)-1) = \sum_{i=1}^d \left(\kappa_{\nu_i}\cdot\mu(\gamma(\nu_i))\cdot\delta_{b,\nu_i}^{(n/\gamma(\nu_i))} - \mu(\gamma(i))\cdot\delta_{b,i}^{(n/\gamma(i))}\right).
\end{equation}

\begin{lemma}\label{KappaAndPrime}
	\begin{enumerate}
		\item\label{kappa1} If $\kappa_{\nu_i}\ne 1$ for some $1\le i\le d$ then $\frac{n}{d}$ is the smallest prime dividing $n$ and $\kappa_{\nu_j}=1$ for every $1\le j \le d$ with $j\ne i$.
		\item\label{PrimeGreaterThand} If $d>3$ then $n$ is not divisible by any prime greater than $d$.
	\end{enumerate}
\end{lemma}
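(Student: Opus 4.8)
The plan is to read off both statements from the rigidity relations $(\nu_i)\sim_{n/c}(i)$, valid for every $c\mid n$ with $c\ne 1$; equivalently, for every proper divisor $e$ of $n$ the multisets $\{\nu_i \bmod e\}_{1\le i\le d}$ and $\{i\bmod e\}_{1\le i\le d}$ coincide up to sign. Write $p_0$ for the least prime divisor of $n$, so that $n/p_0$ is the largest proper divisor of $n$.

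For (\ref{kappa1}): if $\kappa_{\nu_i}\ne 1$ then $\nu_i\equiv 0\bmod n$, hence $\nu_i\equiv 0\bmod e$ for every proper divisor $e$. Reducing the multiset identity modulo $e$ forces the class of $0$ to occur among $\{j\bmod e:1\le j\le d\}$, so $e$ divides some $j\le d$ and thus $e\le d$. Applying this to the largest proper divisor $e=n/p_0$ gives $n/p_0\le d$; as $d$ is itself a proper divisor we conclude $d=n/p_0$, i.e. $n/d=p_0$. For the uniqueness I reduce modulo the proper divisor $e=d$: among $\{j\bmod d:1\le j\le d\}$ the class of $0$ occurs exactly once (only $j=d$), so exactly one index has $\nu_j\equiv 0\bmod d$; since $\nu_i\equiv 0\bmod n$ this index is $i$, and every other $\nu_j$ is then nonzero modulo $d$, hence modulo $n$, giving $\kappa_{\nu_j}=1$.

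For (\ref{PrimeGreaterThand}) I argue by contradiction: suppose a prime $p>d$ divides $n$. Since $n$ is odd and $d>3$ we have $d\ge 5$, and since $n$ is not a prime power $p$ is a proper divisor, so $p\le n/p_0$; were $n/d=p_0$ this would force $p\le d$, so $n/d\ne p_0$ and (\ref{kappa1}) yields $\kappa_{\nu_i}=1$ for all $i$. The crux is to bound both coefficients in \eqref{Separated} by $2$ in absolute value for every $b_0\in\B$. The mechanism is reduction modulo $p$: for each $1\le i\le d$ one has $p\mid n/\gamma(i)$ (immediate when $v_p(n)\ge 2$ since $\gamma(i)$ is squarefree, and when $v_p(n)=1$ because $p\nmid\gamma(i)$, as $\AMM{i}{n_p}\ne 0$ for $0<i<p$), so a nonzero summand forces $b_0\equiv\pm i\bmod p$; as $d<p$ there are at most two such $i$, whence $|C_{b_0}(\psi_d(g_0)-1)|\le 2$. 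The same applies on the $u$-side: reducing the rigidity relation modulo the proper divisor $n_p$ shows each $\AMM{\nu_i}{n_p}$ lies in $\{1,\dots,d\}$, so again $p\mid n/\gamma(\nu_i)$ and a nonzero summand forces $b_0\equiv\pm\nu_i\bmod p$; since $\{\nu_i\bmod p\}_{1\le i\le d}=\{i\bmod p\}_{1\le i\le d}$ up to sign and $\kappa_{\nu_i}=1$, this gives $|C_{b_0}(\psi_d(u)-1)|\le 2$. Therefore $|C_{b_0}(\psi_d(u)-1)-C_{b_0}(\psi_d(g_0)-1)|\le 4<d$ for every $b_0\in\B$, contradicting \eqref{Difference}.

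The main obstacle is precisely this uniform bound of $2$ on each coefficient. Its delicacy is that the modulus controlling the $\delta$-term indexed by $i$ is $n/\gamma(i)$, whose $p$-part jumps between $n_p$ and $n_p/p$ according to whether $p\mid\gamma(i)$; what rescues the argument is that this $p$-part is in any case divisible by $p$, so one may discard all other prime parts and work purely modulo $p$, where the hypothesis $p>d$ makes the counting immediate. The only point needing care is confirming $p\mid n/\gamma(\nu_i)$ for the unknown exponents $\nu_i$: here one uses $\AMM{\nu_i}{n_p}\in\{1,\dots,d\}$, obtained from the rigidity relation modulo $n_p$, to rule out $p\mid\nu_i$ and hence $p\mid\gamma(\nu_i)$ in the case $v_p(n)=1$.
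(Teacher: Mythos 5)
Your proof is correct and follows essentially the same route as the paper: part (1) via the rigidity relation at the largest proper divisor $n/p_0$ (and at $d=n/p_0$ for uniqueness), and part (2) by reducing modulo the large prime $p$ to bound each of $C_{b_0}(\psi_d(g_0)-1)$ and $C_{b_0}(\psi_d(u)-1)$ by $2$ in absolute value, contradicting \eqref{Difference} since $4<d$. Your explicit case split on $v_p(n)$ to justify $p\mid n/\gamma(i)$ and $p\mid n/\gamma(\nu_i)$ is a slightly more careful rendering of a step the paper states tersely, but the argument is the same.
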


\begin{proof}
	Let $p$ denote the smallest prime dividing $n$.
	
	\eqref{kappa1} Suppose that $\kappa_{\nu_i}\ne 1$. Then $\nu_i\equiv 0 \bmod n$.
	As $(i)\sim_{\frac{n}{p}} (\nu_i)$ we deduce that $k\equiv 0 \bmod \frac{n}{p}$ for some $1\le k\le d$. Therefore $d=k=\frac{n}{p}$ and for every $1\le j\le d$ with $j\ne i$ we have $\nu_j\not\equiv 0 \bmod \frac{n}{p}$.  Hence $\kappa_{\nu_j}=1$.
	
	\eqref{PrimeGreaterThand} Suppose that $q$ is a prime divisor of $n$ with $d<q$.
	Then $\frac{n}{d}\ne p$ and therefore, by \eqref{kappa1}, $\kappa_{\nu_i}=1$ for every $1\le i \le d$.
	Thus, by \eqref{Difference} and (\ref{cbchiuMenos1}) and ignoring the signs provided by the $\mu(\gamma(i))$ and $\mu(\gamma(\nu_i))$, it is enough to show that $\delta^{(n/\gamma(i))}_{b,i}\ne 0$ for at most two $i$'s and
	$\delta^{(n/\gamma(\nu_i))}_{b,\nu_i}\ne 0$ for at most two $i$'s, since by assumption $d > 3$, i.e. $d \geq 5$.
	Observe that if $1\le i\le d$ then $q\nmid i$ and hence $\frac{n}{\gamma(i)}$ is multiple of $q$.
	Moreover, if $1\le i, j \le d$ with $i\ne j$  then $-q<i-j<i+j<2q$. Therefore $i\not\sim_q j$ unless $j=q-i$.
	As $(i) \sim_{n/p} (\nu_i)$ and $q \mid \frac{n}{p}$ we have $(i)\sim_q (\nu_i)$, the lemma follows.
\end{proof}

For a non-zero integer $m$ let $P(m)$ denote the number of prime divisors of $m$.
We obtain an upper bound for $\left|C_b(\psi_d(u)-1) - C_b(\psi_d(g_0)-1)\right|$ in terms of $P(d)$.

\begin{lemma}\label{BoundDifference}
	For every $b\in\B$ we have
	$$\left|C_b(\psi_d(u)-1) - C_b(\psi_d(g_0)-1)\right| \le 1+2^{P(d)+2}.$$
	Moreover if $\kappa_{\nu_i} = 1$ for every $1\leq i \leq d$ then
	$$\left|C_b(\psi_d(u)-1) - C_b(\psi_d(g_0)-1)\right| \le 2^{P(d)+2}.$$
\end{lemma}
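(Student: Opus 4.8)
The plan is to estimate the two sums in \eqref{cbchiuMenos1} separately. Since each $\gamma(i)$ is squarefree, $|\mu(\gamma(i))|=1$, so the absolute value of a summand is governed by how many of the factors $\delta_{b,i}^{(n/\gamma(i))}$ and $\delta_{b,\nu_i}^{(n/\gamma(\nu_i))}$ are non-zero, weighted by $\kappa_{\nu_i}$. By Lemma~\ref{KappaAndPrime}.\eqref{kappa1} at most one index has $\kappa_{\nu_i}=2$; writing $\kappa_{\nu_i}=1+[\kappa_{\nu_i}=2]$ peels off a single extra summand of absolute value at most $1$. Thus it suffices to prove, for every $b\in\B$,
\[\Big|\sum_{i=1}^d \mu(\gamma(i))\,\delta_{b,i}^{(n/\gamma(i))}\Big|\le 2^{P(d)+1}\qquad\text{and}\qquad \Big|\sum_{i=1}^d \mu(\gamma(\nu_i))\,\delta_{b,\nu_i}^{(n/\gamma(\nu_i))}\Big|\le 2^{P(d)+1};\]
adding these yields $2^{P(d)+2}$, while the peeled-off summand accounts for the extra $1$ in the general estimate.

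First I would treat the $g_0$-sum. Fixing the sign $+$, set $J=\{\,i\in\{1,\dots,d\}:b\equiv i\bmod n/\gamma(i)\,\}$; the sign $-$ is obtained by replacing $b$ with $-b\in\B$, so a bound $|J|\le 2^{P(d)}$ gives $2^{P(d)+1}$ non-zero terms in total. To each $i\in J$ attach the set $\phi(i)=\{\,p\mid d:p\mid\gamma(i)\,\}$. I claim $\phi$ is injective on $J$. Suppose $\phi(i)=\phi(i')$ and fix a prime $p\mid d$. If $p\mid\gamma(i)$ (hence $p\mid\gamma(i')$), the $p$-part of $n/\gamma(i)$ is $n_p/p$, so $i\equiv b\equiv i'\bmod n_p/p$; by Lemma~\ref{Elementary}.\eqref{pdividinglevel}, $i\equiv\MM{i}{\frac{n_p}{p}}\bmod n_p$ and likewise for $i'$, whence $i\equiv i'\bmod n_p$. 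If $p\nmid\gamma(i)$, the $p$-part of the modulus is $n_p$ and $i\equiv b\equiv i'\bmod n_p$ directly. In either case $i\equiv i'\bmod n_p$ for all $p\mid d$, so $i\equiv i'\bmod d$ and therefore $i=i'$ since $1\le i,i'\le d$. As $\phi$ takes at most $2^{P(d)}$ values, $|J|\le 2^{P(d)}$, proving the first inequality.

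The second sum is handled by the same device $\phi(i)=\{\,p\mid d:p\mid\gamma(\nu_i)\,\}$, but injectivity can now fail because $\nu_1,\dots,\nu_d$ may repeat. The computation above still shows $\phi(i)=\phi(i')\Rightarrow\nu_i\equiv\nu_{i'}\bmod n_p$ for all $p\mid d$, i.e. $\nu_i\equiv\nu_{i'}$ modulo $d^{*}:=\prod_{p\mid d}n_p$; hence $\phi$ determines the residue of $\nu_i$ modulo $d^{*}$, and each fibre of $\phi$ consists of indices sharing this residue together with the sign-$+$ condition. To bound the fibres I would invoke the relations $(\nu_i)\sim_{n/c}(i)$: with $c=n/d^{*}$ they give $(\nu_i)\sim_{d^{*}}(i)$, so the classes $\{\nu_i\bmod d^{*}\}$ coincide, up to sign, with the reductions of $1,\dots,d$. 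When $d<d^{*}$ one has $d^{*}\ge 3d$ (as $d^{*}/d$ is an odd integer larger than $1$), so $1,\dots,d$ lie in pairwise distinct classes modulo $d^{*}$ even up to sign; every fibre then has at most one element and the second inequality follows exactly as for $g_0$.

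The main obstacle is the residual case in which $1,\dots,d$ are not separated up to sign modulo $d^{*}$, namely $d^{*}=d$ (equivalently $\gcd(d,n/d)=1$) and the companion case $d^{*}=n$ (every prime of $n$ divides $d$). Here a $\phi$-fibre could a priori carry the multiplicity $2$ of a non-zero class, or an eigenvalue multiplicity of $\Theta_d(u)$. To exclude a second index in a fibre I would feed in the relations $(\nu_i)\sim_{n/c}(i)$ for divisors $c$ built from the primes of $n/d$ (for instance $c=n/(dq)$ with $q\mid n/d$ prime), which upgrade the coincidence modulo $d$ to one modulo $dq$, together with the fact that each $\nu_i\bmod n_p$ is confined to a two-element set for $p\mid n$; this forces the two indices to agree. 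Granting the fibre bound in all cases, each sum contributes at most $2^{P(d)+1}$ non-zero terms, so by \eqref{cbchiuMenos1} the difference is at most $2^{P(d)+2}$ when all $\kappa_{\nu_i}=1$ and at most $1+2^{P(d)+2}$ in general.
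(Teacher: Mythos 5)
Your reduction to two separate sums, the peeling-off of the single $\kappa_{\nu_i}=2$ term via Lemma~\ref{KappaAndPrime}.\eqref{kappa1}, and the injectivity argument for the $g_0$-sum are all sound and close to the paper's argument. The gap is in the $\nu$-sum: your key claim --- that each $\phi$-fibre, after fixing a sign, contains at most one index --- is false in exactly the ``residual'' cases you defer, and the repair you sketch cannot work. Take $n=15$, $d=3$, $\nu=(2,2,6)$: this is compatible with $(\nu_i)\sim_5 (i)$ and $(\nu_i)\sim_3 (i)$, which are the only relations available (the admissible moduli are $n/c$ for $c\mid n$, $c\ne 1$); one computes $\gamma(2)=1$, $\gamma(6)=3$, and for $b=2$ the indices $1$ and $2$ lie in the same fibre $\phi^{-1}(\emptyset)$ with the same sign $+$. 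The two indices cannot be ``forced to agree'': the $\nu_i$ form a multiset and genuine repetitions occur, and your proposed auxiliary divisor $c=n/(dq)$ equals $1$ here, so no further relation exists. The sentence ``granting the fibre bound in all cases'' hides precisely the step that fails.

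The correct fix is not to separate the two signs. The paper bounds, for each divisor $e$ of $d'$, the combined-sign set $Y_e=\left\{1\le i\le d : \gcd(d,\gamma(\nu_i))=e,\ \delta_{b,\nu_i}^{(n/\gamma(\nu_i))}=1\right\}$ by $2$: your own congruence computation (which is essentially the paper's, using Lemma~\ref{Elementary}) shows that after normalizing signs all $\nu_i$ with $i\in Y_e$ are congruent modulo $d$, hence represent a single class of $\Gamma_d$; since $(\nu_i)\sim_d (i)$ and at most two of $1,\dots,d$ represent the same class of $\Gamma_d$, this gives $|Y_e|\le 2$. Summing over the $2^{P(d)}$ divisors of $d'$ yields the needed $2^{P(d)+1}$. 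By contrast, once per-sign fibres of size two are admitted, your organization only gives $2\cdot 2\cdot 2^{P(d)}=2^{P(d)+2}$ for the $\nu$-sum alone, and the stated bound is lost. Note also that the single relation $(\nu_i)\sim_d(i)$ suffices for the whole lemma; the additional relations you invoke are unnecessary.
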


\begin{proof}
	Using (\ref{cbchiuMenos1}), and ignoring the sings given by $\mu(\gamma(i))$ and $\mu(\gamma(\nu_i))$, it is enough to prove that
	$$\sum_{i=1}^d \delta_{b,i}^{(n/\gamma(i))}\le 2^{P(d)+1} \quad \text{ and } \quad \sum_{i=1}^d \kappa_{\nu_i}\delta_{b,\nu_i}^{(n/\gamma(\nu_i))}\le 1+2^{P(d)+1}.$$

	Observe that $\kappa_{\nu_i}=2$ for at most one $i$ by Lemma~\ref{KappaAndPrime}.(\ref{kappa1}).
	Recall that $d' = \prod_{p \mid d}p$. Thus the lemma is a consequence of the following inequalities for every $e$ dividing $\PP{d}$:
	$$\left| \left\{  1\le i\le d : \gcd(d,\gamma(i))=e, \delta_{b,i}^{(n / \gamma(i))}=1 \right\} \right|  \leq  2 \quad \text{ and}$$

	$$ 	\left| \left\{  1\le i\le d : \gcd(d,\gamma(\nu_i))=e,
	\delta_{b,\nu_i}^{(n / \gamma(\nu_i))}=1 \right\} \right| \leq  2,
	$$
	since the number of divisors of $d'$ is $2^{P(d)}$ and if $\kappa_{\nu_i} = 2$ for some $\nu_i$ this provides an additional $1$.
	We prove the second inequality, only  using that $(\nu_i)\sim_d (i)$.
	This implies the first inequality by applying the second one to $u=g_0$.
	
	For a fixed $e$ dividing $d'$ let $Y_e = \left\{ 1\le i \le d : \gcd(d,\gamma(\nu_i))=e, \delta_{b,\nu_i}^{(n / \gamma(\nu_i))}=1 \right\}$.
	By changing the sign of some $\nu_i$'s, we may assume without loss of generality that if
	$\delta_{b,\nu_i}^{(n/\gamma(\nu_i))}=1$ then $b\equiv \nu_i \bmod \frac{n}{\gamma(\nu_i)}$.
	Thus, if $i\in Y_e$ then $b\equiv \nu_i \bmod \frac{n}{\gamma(\nu_i)}$.
	We claim that if $i,j\in Y_e$ then $\nu_i\equiv \nu_j \bmod d$.
	Indeed, let $p$ be prime divisor of $d$. If $n_p \neq d_p$ then $d_p \leq \left(\frac{n}{\gamma(\nu_i)}\right)_p$, so $\nu_i\equiv \nu_j \bmod d_p$. If $p \nmid e$ then $n_p =  \left(\frac{n}{\gamma(\nu_i)}\right)_p$ and so also $\nu_i\equiv \nu_j \bmod d_p$.
	Otherwise, i.e. if $n_p=d_p$ and $p\mid e$, then $p$ divides both $\gamma(\nu_i)$ and $\gamma(\nu_j)$ and $\nu_i \equiv \nu_j \bmod \frac{d_p}{p}$. Therefore $\nu_i\equiv \nu_j \bmod n_p$, by Lemma~\ref{Elementary}.\eqref{LevelStep}.
	As $(\nu_i)\sim_d (i)$ and there are at most two $i$'s with $1\le i\le d$ representing the same class in $\Gamma_d$, we deduce that $|Y_e|\le 2$, as desired.
\end{proof}

We are ready to finish the proof of Theorem~\ref{Main}. Recall that we are arguing by contradiction and $n$, and hence also $d$, is odd.

By (\ref{Difference}) and Lemma~\ref{BoundDifference} we have $d\le 1+2^{P(d)+2}$ and this has strong consequences on the possible values of $d$. 	
Indeed if $P(d)\ge 3$ then
$$1+2^{P(d)+2}\ge d \ge 3\cdot 5\cdot 7\cdot 2^{P(d)-3} > (105-2^5) +  2^{P(d)+2} = 73 + 2^{P(d)+2},$$
 a contradiction. Thus, if $P(d)=2$ then $d=15$ and if $P(d)=1$ then $d\in \{3,5,7,9\}$.

However, if $d=9$ then $\left|C_{b_0}(\psi_9(u)-1) - C_{b_0}(\psi_9(g_0)-1)\right|=9$ by Lemma~\ref{BoundDifference} and hence $\kappa_{\nu_i}=2$ for one $1\le i\le d$. This implies, by Lemma~\ref{KappaAndPrime}.(\ref{kappa1}), that $n=27$ contradicting the assumptions that $n$ is not a prime power. Therefore $d\in \{3,5,7,15\}$.
We deal with these cases separately using \eqref{Separated} and \eqref{cbchiuMenos1}.
Observe that if $p$ is a prime bigger than $d$ then $p \mid \frac{n}{\gamma(i)}$ for every $1\leq i \leq d$ and so also $p \mid \frac{n}{\gamma(\nu_i)}$, since $(i) \sim_p (\nu_i)$.

\underline{Assume that $d = 3$}. Combining Lemma~\ref{KappaAndPrime}.(\ref{kappa1}) with the assumptions that $n$ is not a prime power, we deduce that $\kappa_{\nu_i}=1$ for every $1\le i \le 3$.
Suppose that there is a prime $p\mid n$ with $p\ge 7$.
Then $p\mid \frac{n}{\gamma(i)}$ and $p\mid \frac{n}{\gamma(\nu_i)}$ for every $1\le i \le 3$.
Thus
$$\left|\left\{1\le i \le 3 : \delta_{b,i}^{(n/\gamma(i))} =1\right\}\right|\le 1 \text{ and } \left|\left\{1\le i \le 3 : \delta_{b,\nu_i}^{(n/\gamma(\nu_i))} =1\right\}\right|\le 1 \text{ for every } b\in\B$$
which implies $|C_{b_0}(\psi_3(u)-1) - C_{b_0}(\psi_3(g_0)-1)|\le 2$, contradicting (\ref{Difference}).
So $\PP{n}=15$.

Moreover, $n_3=3$ because otherwise $3\mid \frac{n}{\gamma(i)}$ and $3\mid \frac{n}{\gamma(\nu_i)}$ and so $15\mid \frac{n}{\gamma(i)}$ and $15\mid \frac{n}{\gamma(\nu_i)}$ for every $1\le i \le 3$. Hence $|C_{b_0}(\psi_3(u)-1)|$ and $|C_{b_0}(\psi_3(u)-1)|$ are both at most 1, in contradiction with \eqref{Difference}.
If $5^3\mid n$, then $25 \mid \frac{n}{\gamma(\nu_i)}$ and $25 \mid \frac{n}{\gamma(i)}$ for every $1\le i\le 3$, which implies
$\left| C_{b_0}(\psi_3(g_0)-1) \right| \leq 1$ and $\left| C_{b_0}(\psi_3(u)-1) \right| \leq 1$, again a contradiction.
Therefore $n\in \{15,75\}$.
Since $(i) \sim_3 (\nu_i)$, we may assume that $3\mid \nu_3$ and $3\nmid \nu_i$ for $i=1,2$.

Suppose that $n = 15$. Then, as $(i)\sim_5 (\nu_i)$, we have $\gamma(1) = \gamma(2) = \gamma(\nu_1) = \gamma(\nu_2) = 1$ and $\gamma(3) = \gamma(\nu_3) = 3$. So
$$C_b(\psi_3(g_0)-1) = \delta_{b,1}^{15} + \delta_{b,2}^{15} -\delta_{b,3}^{5} \text{ and } C_b(\psi_3(u)-1) = \delta_{b,\nu_1}^{15} + \delta_{b,\nu_2}^{15} -\delta_{b,\nu_3}^{5} \text{ for every }b\in\B,$$
implying
$$ C_b(\psi_3(u)-1) - C_b(\psi_3(g_0)-1) = \delta_{b,\nu_1}^{15} + \delta_{b,\nu_2}^{15} -\delta_{b,\nu_3}^{5} - \delta_{b,1}^{15} - \delta_{b,2}^{15} + \delta_{b,3}^{5}.$$
Since $1 \nsim_{15} 2$, we must have $C_{b_0}(\psi_3(u)-1)- C_{b_0}(\psi_3(g_0)-1) = 3$ and $\nu_3 \sim_5 1$ while $\nu_1 \sim_5 \nu_2 \sim_5 2$.
Then $C_1(\psi_3(u)-1) - C_1(\psi_3(g_0)-1) = -2$, contradicting (\ref{dmu}).

Suppose that $n = 75$. Then $\gamma(1) = \gamma(2) = 5$, $\gamma(3) = 3$ and
$$C_b(\psi_3(g_0)-1) = -\delta_{b,1}^{15} - \delta_{b,2}^{15} - \delta_{b,3}^{25} \text{ for every }b\in \B.$$
Suppose $\nu_3 \sim_{25} 3$.
Then
$$C_b(\psi_3(u)-1) = -\delta_{b,\nu_1}^{15} - \delta_{b,\nu_2}^{15} - \delta_{b,\nu_3}^{25} \text{ for every }b\in \B.$$
As $\delta_{b,\nu_3}^{25}=\delta_{b,3}^{25}$, we have  $|C_{b_0}(\psi_3(u)-1) - C_{b_0}(\psi_3(g_0)-1)| \leq 2$, contradicting (\ref{Difference}).
Thus $\nu_3\not\sim_{25} 3$ and we may assume $\nu_1 \sim_{25} 3$.
If $\nu_3 \sim_{25} 2$ then
$$C_b(\psi_3(u)-1) = \delta_{b,\nu_1}^{75} - \delta_{b,\nu_2}^{15} + \delta_{b,\nu_3}^{5} \text{ for every }b\in \B.$$
However $C_{13}(\psi_3(u)-1)-C_{13}(\psi_3(g_0)-1)=2$, contradicting (\ref{dmu}).
So $\nu_3 \sim_{25} 1$ and arguing as above we obtain $C_{14}(\psi_3(u)-1)-C_{14}(\psi_3(g_0)-1)\in \{1,2\}$, again a contradiction with (\ref{dmu}).

\underline{Assume that $d = 5$}. By Lemma~\ref{KappaAndPrime}.(\ref{PrimeGreaterThand}) and the assumptions on $n$, we obtain $\PP{n}=15$. As $(i)\sim_5 (v_i)$, we may assume that $5\mid \nu_5$ and $5\nmid \nu_i$ for every $1\le i \le 4$.
Suppose that $n = 15$.
In this case
$$C_b(\psi_5(g_0)-1) = \delta_{b,1}^{15} + \delta_{b,2}^{15} - \delta_{b,3}^{5} + \delta_{b,4}^{15} - \delta_{b,5}^{3}\text{ for every }b\in \B.$$
If $3\mid \nu_5$ and $3\nmid \nu_i$ for every $1\le i\le 4$, then
$$C_b(\psi_5(u)-1) =  \delta_{b,\nu_1}^{15} + \delta_{b,\nu_2}^{15} + \delta_{b,\nu_3}^{15} + \delta_{b,\nu_4}^{15} + 2\text{ for every }b\in \B$$
and hence
$$C_1(\psi_5(u)-1)-C_1(\psi_5(g_0)-1)=2+\delta_{1,\nu_1}^{15} + \delta_{1,\nu_2}^{15} + \delta_{1,\nu_3}^{15} + \delta_{1,\nu_4}^{15} \le 4,$$ contradicting (\ref{dmu}).
Therefore, as $(i)\sim_3 (\nu_i)$, we may assume that $3\mid \nu_1$ and $3\nmid \nu_i$ for every $2\le i \le 5$. This implies
$$C_b(\psi_5(u)-1) = - \delta_{b,\nu_1}^{5} + \delta_{b,\nu_2}^{15} + \delta_{b,\nu_3}^{15} + \delta_{b,\nu_4}^{15} -\delta_{b,\nu_5}^3\text{ for every }b\in \B.$$
As both $\left|C_{b_0}(\psi_5(u)-1)\right|$ and $\left|C_{b_0}(\psi_5(g_0)-1)\right|$ are at most $2$, we obtain a contradiction with (\ref{Difference}).
Therefore $n\ne 15$ and $\kappa_{\nu_i}=1$ for every  $1\le i \le 5$ by Lemma~\ref{KappaAndPrime}.(\ref{kappa1}).

If $25\mid n$ or $27\mid n$ then it is easy to see that $\left| C_{b_0}(\psi_5(u)-1) \right|\leq 2$ and $\left| C_{b_0}(\psi_5(g_0)-1) \right| \leq 2$, contradicting (\ref{Difference}).
Thus $n=45$.
In this case we have
$$C_b(\psi_5(g_0)-1) = -\delta_{b,1}^{15} + \delta_{b,2}^{45} + \delta_{b,3}^{45} + \delta_{b,4}^{45} - \delta_{b,5}^{9}\text{ for every }b\in\B.$$
If $\nu_5 \sim_9 1$ then
$$C_b(\psi_5(u)-1) = \delta_{b,\nu_1}^{45} + \delta_{b,\nu_2}^{45} + \delta_{b,\nu_3}^{45} + \delta_{b,\nu_4}^{45} + \delta_{b,\nu_5}^3\text{ for every }b\in\B.$$
As $(i)\sim_{15} (\nu_i)$, we obtain $\left| C_{b_0}(\psi_5(u)-1) \right|\le 2$ and $\left| C_{b_0}(\psi_5(g_0)-1) \right|\le 2$ contradicting (\ref{Difference}).
If $\nu_5\not\sim_9 1$ then we may assume that $\nu_1 \sim _9 1$.
Hence
$$C_b(\psi_5(u)-1) = -\delta_{b,\nu_1}^{15} + \delta_{b,\nu_2}^{45} + \delta_{b,\nu_3}^{45} + \delta_{b,\nu_4}^{45} - \delta_{b,\nu_5}^{9}\text{ for every }b\in\B.$$
Again as $(i)\sim_{15}(\nu_i)$, we have both $\left| C_{b_0}(\psi_5(u)-1) \right|$ and $\left| C_{b_0}(\psi_5(g_0)-1) \right|$ at most $2$, which yields a contradiction.

\underline{Assume that $d=7$.}
As $(i)\sim_7 (\nu_i)$, we may assume that $7\mid \nu_7$ and $7\nmid \nu_i$ for every $1\le i \le 6$.
Thus $7\mid \frac{n}{\gamma(i)}$ and $7\mid \frac{n}{\gamma(\nu_i)}$ for every $1\le i \le 6$.
Hence $\left|C_{b_0}(\psi_7(g_0)-1)\right|\le 3$. Moreover, if $\kappa_{\nu_7}\ne 2$ the we also have $\left|C_{b_0}(\psi_7(u)-1)\right|\le 3$ yielding a contradiction with (\ref{Difference}).
Therefore $\kappa_{\nu_7}=2$ (i.e. $n\mid \nu_7$) and by Lemma~\ref{KappaAndPrime} and the assumptions on $n$ we deduce that either $n=21$ or $n=35$.

Suppose that $n=21$.
As $(i)\sim_3 (\nu_i)$ we may assume that  $3\mid \nu_3$ and $3\nmid \nu_i$ for every $i\in \{1,2,4,5,6\}$.
This implies for every $b\in\B$ that
$$C_{b}(\psi_7(u)-1) = \delta_{b,\nu_1}^{21} +\delta_{b,\nu_2}^{21}-\delta_{b,\nu_3}^{7}+\delta_{b,\nu_4}^{21}+\delta_{b,\nu_5}^{21}+\delta_{b,\nu_6}^{21}+2$$
and
$$C_{b}(\psi_7(g_0)-1) = \delta_{b,1}^{21} +\delta_{b,2}^{21}-\delta_{b,3}^{7}+\delta_{b,4}^{21}+\delta_{b,5}^{21}-\delta_{b,6}^{7}-\delta_{b,7}^{3}.$$
Hence, as $(i)\sim_7 (\nu_i)$, we obtain $\left|C_{b_0}(\psi_7(g_0)-1)\right|\le 2$ and $\left|C_{b_0}(\psi_7(u)-1)\right|\le 4$, contradicting (\ref{Difference}).

Suppose that $n=35$.
As $(i)\sim_5 (\nu_i)$ and $(i)\sim_7(\nu_i)$, we have for every $b\in\B$ that
$$C_{b}(\psi_7(u)-1) = \delta_{b,\nu_1}^{35} +\delta_{b,\nu_2}^{35}+\delta_{b,\nu_3}^{35}+\delta_{b,\nu_4}^{35}+\delta_{b,\nu_5}^{35}+\delta_{b,\nu_6}^{35}+2$$
and
$$C_{b}(\psi_7(g_0)-1) = \delta_{b,1}^{35} +\delta_{b,2}^{35}+\delta_{b,3}^{35}+\delta_{b,4}^{35}-\delta_{b,5}^{7}+\delta_{b,6}^{35}-\delta_{b,7}^{5}.$$
Hence, again $\left|C_{b_0}(\psi_7(g_0)-1)\right|\le 2$ and $\left|C_{b_0}(\psi_7(u)-1)\right|\le 4$, yielding a contradiction with (\ref{Difference}).

\underline{Finally assume that $d = 15$}.
Suppose that $n=45$.
In this case we have for every $b\in\B$ that
$$C_b(\psi_{15}(g_0)-1) = -\delta_{b,1}^{15}
+ \delta_{b,2}^{45} + \delta_{b,3}^{45} + \delta_{b,4}^{45} - \delta_{b,5}^{9} + \delta_{b,6}^{45} + \delta_{b,7}^{45} - \delta_{b,8}^{15} - \delta_{b,9}^{15} + \delta_{b,10}^{3} + \delta_{b,11}^{45} + \delta_{b,12}^{45} + \delta_{b,13}^{45} + \delta_{b,14}^{45} - \delta_{b,15}^{9},$$
which implies that $\left|C_{b_0}(\psi_{15}(g_0)-1)\right| \le 4$.
Since $(i) \sim_{5} (\nu_i)$, we deduce that $|C_{b_0}(\psi_{15}(u)-1)| \le 10$, since at most ten of the $\mu(\gamma(\nu_i))$ are equal.
This yields a contradiction with (\ref{Difference}).
Therefore $n\ne 45$ and $\kappa_{\nu_i}=1$ for every $1\le i \le 15$ by Lemma~\ref{KappaAndPrime}.(\ref{kappa1}).
If there is a prime $p\mid n$ with $p\ge 7$ then it is easy to see that $|C_{b_0}(\psi_{15}(g_0)-1)| \leq 7$ and $|C_{b_0}(\psi_{15}(u)-1)| \leq 7$, in contradiction with \eqref{Difference}.
Thus $n' = 15$.
If $25\mid n$ or $27\mid n$ then $|C_{b_0}(\psi_{15}(g_0)-1)| \leq 6$ and $|C_{b_0}(\psi_{15}(u)-1)| \leq 6$, again a contradiction.
As $15$ is a proper divisor of $n$, this implies $n=45$ yielding the final contradiction.

\bibliographystyle{alpha}
\bibliography{PSLOdd}
\end{document}